\newcommand{\cmp}{^{\scriptscriptstyle\complement}}
\newcommand{\abs}[1]{\lvert#1\rvert}
\newcommand{\uxd}[3][\bigtriangleup]{U^*_{#2}#1 U^*_{#3}}
\newcommand{\uxge}[2]{\uxd[\supseteq]{#1}{#2}}
\newcommand{\uxle}[2]{\uxd[\subseteq]{#1}{#2}}
\newcommand{\uxm}[2]{\uxd[\setminus]{#1}{#2}}
\newcommand{\uxeq}[2]{\uxd[=]{#1}{#2}}
\newcommand{\niu}[1]{\abs{N_i\cap U^*_{#1}}}
\newcommand{\ie}{i.e.\ }
\newcommand{\eg}{e.g.\ }
\newcommand{\seq}[3][1]{#2_{#1},\ldots,#2_{#3}}
\newcommand{\bseq}[3][1]{(\seq[#1]{#2}{#3})}
\newcommand{\lemma}[1]{Lemma~\ref{#1}}
\newcommand{\theorem}[1]{Theorem~\ref{#1}}
\newcommand{\fig}[1]{Figure~\ref{#1}}
\newtheorem{thm}{Theorem}
\newtheorem{lem}[thm]{Lemma}
\newtheorem{prop}[thm]{Proposition}
\let\@fnsymbol\@arabic
\newcommand\thankx[1]{\begingroup\let\rlap\relax\thanks{#1}\endgroup}
\begin{document}

\title{Majority dynamics with one nonconformist}
\author{John Haslegrave\thankx{University of Warwick, Coventry, UK. {\tt j.haslegrave@cantab.net}}, Chris Cannings\thankx{University of Sheffield, Sheffield UK. {\tt c.cannings@sheffield.ac.uk}}}
\maketitle

\begin{abstract}
We consider a system in which a group of agents represented by the vertices of a graph synchronously update their opinion based on that of their neighbours. If each agent adopts a positive opinion if and only if that opinion is sufficiently popular among his neighbours, the system will eventually settle into a fixed state or alternate between two states. If one agent acts in a different way, other periods may arise. We show that only a small number of periods may arise if natural restrictions are placed either on the neighbourhood structure or on the way in which the nonconforming agent may act; without either of these restrictions any period is possible.

\textit{Keywords: majority dynamics; threshold automata; voter model; social learning; periodicity.}
\end{abstract}

\section{Introduction}

We consider a general setting in which a number of agents with a system of neighbourhood relationships have binary opinions which they update synchronously based on their neighbours' opinions. Neighbourhood is an arbitrary symmetric relation, and we represent the agents as vertices of a graph with edges, and, if necessary, loops, corresponding to the neighbourhood relation. Perhaps the most natural model for such updating of opinion is for each agent to adopt the more popular opinion among its neighbours (majority dynamics). A more general model along the same lines is to allow each agent to be inclined against a particular opinion, only adopting that opinion if sufficiently many neighbours (not just a simple majority) do. Different agents can be inclined toward different opinions or to different degrees. Such a system forms a threshold network; threshold networks were introduced by McCulloch and Pitts \cite{MP43} to model activation of neurons. They also arise naturally as myopic best response strategies in networks of agents playing a coordination game (see \eg \cite{BS96}, \cite{SC13}).

Majority dynamics and the more general threshold networks have been much studied. A classical result is the period-2 property. Since any finite threshold network has only a finite number of states and the progression from state to state is deterministic and memoryless, periodic behaviour must eventually arise from any possible starting state. What lengths of period are possible? It is not obvious that there is any constant bound on the period, but in fact the only possible periods are $1$ and $2$. This was proved independently by Goles and Olivos \cite{GO81} (see also \cite{Gol87}) and Poljak and S\^{u}ra \cite{PS83}. Poljak and Turz\'ik \cite{PT86} gave good bounds on the time until periodic behaviour begins.

If the network is infinite then the system does not necessarily reach a periodic state. Furthermore, even if it does, any period can occur. Moran \cite{Mor95} showed that with the additional conditions of bounded neighbourhoods and subexponential growth, both of which are necessary, again only periods 1 and 2 are possible. Ginosar and Holzman \cite{GH00} show that under suitable conditions on an infinite graph a local period-2 property holds, in that each agent will eventually have a constant or alternating opinion (though the system as a whole may never become periodic since the times at which agents settle into these patterns could be unbounded).

Other facets of majority dynamics have been studied, such as the question of whether a bias in the initial opinions tends to be preserved by this process (Tamuz and Tessler, \cite{TT15}), and the threshold of initial bias which results in consensus on infinite trees (Kanoria and Montanari, \cite{KM11}). Probabilistic versions of majority dynamics have been studied on highly-structured graphs. A model where agents make synchronous updates to the majority opinion among their neighbours, but occasionally make errors, dates back at least to work by Gray from the 1980s \cite{Gra87}, but a similar model was considered significantly earlier by Spitzer \cite{Spi70}. Most studies on this model are merely computer simulations, but the few rigorous results include Gray's proof that the 1-dimensional version does not have a phase transition \cite{Gra87} and, more recently, the result of Balister, Bollob\'as, Johnson and Walters \cite{BBJW} that if the probability of error is small then the 2-dimensional torus spends almost all its time in a consensus state.

The opposite notion to majority dynamics, where each agent adopts the minority opinion of its neighbourhood, also arises naturally from the myopic best response strategy for a congestion game \cite{SC13}. We may similarly generalise this to an anti-threshold network, where each agent adopts an opinion if it is sufficiently unpopular in the neighbourhood. The period-2 property for finite anti-threshold networks follows immediately from the result on threshold networks.

Cannings \cite{CC09} considered various situations on simple graphs in which there were both majority and minority agents present, showing that cycles of various lengths could occur. He analysed particularly the case of a complete graph, proving that only cycles of length $1$, $2$ and $4$ are possible, with length $4$ only occurring in the special case of having equal numbers of majority and minority agents. A further class of cubic graphs was considered and possible cycle lengths for various numbers of minority agents obtained by direct simulation. A striking feature of these data is that when only one agent makes minority updates while the others make majority updates, only periods $1$, $2$ and $4$ appear. However, this is not true for all graphs (or even all cubic graphs, \eg \fig{cubic5}). 

A consequence of the way the cubic graphs considered in \cite{CC09} are constructed is that they will have no triangles. We show that it is true for all triangle-free simple graphs that only periods $1$, $2$ and $4$ arise for majority dynamics with one additional agent following a different protocol. This result applies in the much more general setting where the nonconforming agent updates his opinion as any function of its neighbours' opinions, not necessarily choosing the minority opinion, and also if triangles are permitted so long as the nonconformist is not part of any triangle. If loops are permitted then there are more possibilities, but we prove that only a few different periods can arise. We also show that if the nonconforming agent does update to the minority opinion of his neighbours, then again only a few different periods can arise, with no restriction on triangles.

We will prove all our results for the general threshold situation, but they could equivalently be re-stated in terms of majority updates. It is easy to see that an agent updating according to an arbitrary threshold may be simulated by a suitable bundle of majority agents, and so any dynamics arising from arbitrary threshold networks with one nonconformist can also arise from majority dynamics with one nonconformist on a larger graph. This larger graph can also easily be chosen in accordance with the various restrictions on graphs that we consider.

Formally, we fix a finite graph $G$, which may have loops but not multiple edges, on vertex set $\{\seq vn\}$. For each $i$, write $N_i$ for the neighbourhood of $v_i$ (including $v_i$ if there is a loop there). The graph is initialised by giving each vertex one of two opinions, which we represent as $\{+1,-1\}$, at time $0$, and all vertices simultaneously update their opinions at each time step. Write $U_t$ for the set of vertices having opinion $+1$ at time $t$. Each vertex $v_i$ has an update rule which depends only on the state of $N_i$ at the previous time step, \ie for each $i$ there is a set system $\mathcal{S}_i\subseteq\mathcal{P}N_i$ such that $v_i\in U_{t+1}$ if and only if $N_i\cap U_t\in\mathcal{S}_i$. We say that $v_i$ has a \textit{threshold rule} with threshold $r_i$ if $\mathcal{S}_i=\{A\subseteq N_i:|A|\geqslant r_i\}$ for some $r_i$, and an \textit{anti-threshold rule} if $\mathcal{S}_i=\{A\subseteq N_i:|A|<r_i\}$. We will always assume that every vertex except $v_1$ has a threshold rule, with $v_i$ having threshold $r_i$ for $i>1$.

\section{A Lyapunov operator}

Proofs of the period-2 property for threshold networks (see \cite{GO81}, \cite{PS83}, \cite{Gol87} and \cite{BS96}) proceed by defining a suitable Lyapunov operator, proving that it is bounded, integer-valued and non-decreasing, so must be ultimately constant, and showing that if at any step the value does not change then the state is identical to the previous state but one. In this section we give a modified Lyapunov operator for the situation where $v_1$ has an arbitrary rule, and show that provided every other vertex has a threshold rule this is still bounded and non-decreasing, and must be an integer multiple of $1/2$, so is ultimately constant. The analysis of what can happen once this operator has reached its final value is much more complicated than for pure threshold networks, and we carry out this analysis separately for triangle-free graphs in Section 3 and for general graphs with $v_1$ having an anti-threshold rule in Section 4.

\begin{thm}\label{lyap}For $t$ sufficiently large, if $v_i\in\uxd{t-1}{t+1}$ then $v_i\in N_1$ and $\niu t=r_i-1$.\end{thm}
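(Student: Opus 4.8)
The plan is to read the statement off the equality case of the monotonicity established earlier. Let $L$ denote the Lyapunov operator of this section. Since $L$ is bounded, non-decreasing and takes values in $\tfrac12\mathbb{Z}$, it is eventually constant, say $L(t+1)=L(t)$ for all $t\geq T$; we prove the theorem for every $t\geq T+1$, so that the configurations $U^*_{t-1},U^*_t,U^*_{t+1}$ all lie in the stabilised range. For such $t$ the increment $L(t+1)-L(t)$ is zero.

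Next, recall how $L$ is shown to be non-decreasing: the increment $L(t+1)-L(t)$ is written as a sum of contributions indexed by the vertices. Following the classical period-$2$ argument, each vertex $v_i$ with $i>1$ contributes a term of the form (the change of $v_i$'s opinion between times $t-1$ and $t+1$) $\times$ (the signed amount by which the input to $v_i$ at time $t$ exceeds its threshold), which is non-negative because $v_i$ obeys a threshold rule; the edges incident to $v_1$ contribute a further bounded, non-negative term, and it is this term that accounts for the half-integer values. As the total increment is zero and every contribution is non-negative, each contribution vanishes separately.

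Then I analyse the vanishing of the contribution of a vertex $v_i$ with $i>1$, according to whether $v_i\in N_1$. If $v_i\notin N_1$, i.e.\ $v_1\notin N_i$, then every neighbour of $v_i$ obeys a threshold rule and the classical computation applies unchanged; here the signed excess is, up to scaling, $\niu t-(r_i-\tfrac12)$, which is never zero as $\niu t$ is an integer, so vanishing of the contribution forces the opinion-change factor to be zero, i.e.\ $v_i\notin\uxd{t-1}{t+1}$. Taking the contrapositive yields the first conclusion. If instead $v_i\in N_1$, the half-integer offset in $v_i$'s effective threshold is cancelled against the $v_1$-edge term, so the relevant excess is now an integer and can equal zero; since $v_i\in\uxd{t-1}{t+1}$ makes the opinion-change factor nonzero, the contribution vanishes only when this integer excess is zero, and unwinding the definitions this reads precisely $\niu t=r_i-1$. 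Combining the two cases gives the theorem.

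The main obstacle is this last step for neighbours of $v_1$: one must verify that the edge $v_1v_i$ is accounted for on exactly one side of the increment, so that after cancellation the contribution of $v_i$ vanishes exactly when $\niu t=r_i-1$, and not when $\niu t=r_i$ or under some weaker or messier condition — getting the sign of the $v_1$-edge correction right is the delicate point. A minor issue is uniformity of ``$t$ sufficiently large'': the same bound works for all vertices at once because it is inherited from the single stabilisation time of $L$.
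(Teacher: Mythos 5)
Your overall strategy coincides with the paper's: build a bounded, non-decreasing, half-integer-valued operator and read the theorem off the equality case, with the dichotomy that for $v_i\notin N_1$ the excess is $\niu t-(r_i-\tfrac12)$, which is never zero, while for $v_i\in N_1$ the relevant excess is an integer that vanishes exactly at $\niu t=r_i-1$. But there is a genuine gap: the operator whose monotonicity you invoke as ``established earlier'' is not established anywhere earlier --- this theorem is precisely where it is constructed --- and the increment decomposition you sketch is not correct as stated. With $v_1$ updating by an arbitrary rule, no term attached to $v_1$'s own update, nor to the edges incident to $v_1$, is sign-controlled, so the assertion that ``the edges incident to $v_1$ contribute a further bounded, non-negative term'' cannot be verified by a routine check (and it is not, in fact, how the half-integer values arise). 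This is exactly the point you flag as ``the delicate point'' and leave open, and it is the heart of the theorem rather than a finishing detail.

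The paper handles it differently from what you describe: it excises $v_1$ from the operator entirely, working with $U^*_t=U_t\setminus\{v_1\}$, letting $x(t)$ count only pairs with both endpoints in $\{v_2,\dots,v_n\}$ (so edges at $v_1$ simply do not appear), and compensating by lowering the effective threshold of each neighbour of $v_1$ from $r_i-\tfrac12$ to $s_i=r_i-1$. The single unit of influence that $v_1$ may or may not contribute is absorbed by this half-unit of slack, which keeps every term $\pm\bigl(\niu t-s_i\bigr)$ of $z(t+1)-z(t)$ non-negative while allowing it to vanish at a neighbour of $v_1$ exactly when $\niu t=r_i-1$; the half-integer values of $z$ come from the $s_i=r_i-\tfrac12$ at non-neighbours, not from a $v_1$-edge term. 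Your equality-case analysis is correct once such a construction is in place, but as written the argument is circular: the monotone decomposition that carries all the content is assumed rather than proved.
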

\begin{proof}For $i=2,\ldots,n$ set
\[
s_i=\begin{cases}r_i-1&\text{if }v_i\in N_1\\r_i-\tfrac12&\text{if }v_i\not\in N_1\,.\end{cases}
\]
Note that if $i\in U^*_{t+1}$ then $\niu t\geqslant s_i$, if $i\notin U^*_{t+1}$ then $\niu t\leqslant s_i$, and if $v_i\not\in N_1$ then both inequalities are strict. (If $v_i$ is a neighbour of $v_1$ and $\niu t=s_i$ then the opinion of $v_i$ at time $t+1$ equals that of $v_1$ at time $t$.)

For $t>0$ define $x(t)$ to be the number of pairs $(i,j)$ such that $i\in U^*_t$ and $j\in U^*_{t-1}\cap N_i$. Let $y(t)=\sum_{i\in U^*_t}s_i$, and let $z(t)=x(t)-y(t)-y(t-1)$. 

Note that $z(t+1)-z(t)=x(t+1)-x(t)+y(t-1)-y(t+1)$. We may write $x(t+1)$ as $\sum_{i\in U^*_{t+1}}\niu t$ and $x(t)$ as $\sum_{i\in U^*_{t-1}}\niu t$. Consequently
\begin{equation*}
x(t+1)-x(t)=\sum_{\substack{i\in U^*_{t+1}\\i\notin U^*_{t-1}}}\niu t-\sum_{\substack{i\in U^*_{t-1}\\i\notin U^*_{t+1}}}\niu t\,,
\end{equation*}
and so
\begin{equation*}
z(t+1)-z(t)=\sum_{\substack{i\in U^*_{t+1}\\i\notin U^*_{t-1}}} \big(\niu t-s_i\big)+\sum_{\substack{i\in U^*_{t-1}\\i\notin U^*_{t+1}}}\big(s_i-\niu t\big)\,.
\end{equation*}
By our earlier observation, this is a sum of non-negative terms, and so $z(t+1)\geqslant z(t)$ for each $t$. Further, if $v_i$ contributes to either sum then the corresponding term is strictly positive, and so $z(t+1)>z(t)$, unless $v_i$ is adjacent to $v_1$.

Since $2z(t)$ is an integer, and at most $n^2+4n$, $z(t)$ must eventually be constant. Therefore, for $t$ sufficiently large that $z(t)$ has reached its final value, all terms in the sum are zero. Consequently if $v_i\in\uxd{t+1}{t-1}$ then $v_i$ is adjacent to $v_1$ and we must have $\niu t=s_i=r_i-1$.\end{proof}

\section{General rules in triangle-free graphs}

In this section we consider graphs where $v_1$ is not in a non-degenerate triangle. In the loopless case we show that only periods 1, 2 and 4 are possible, but when loops are permitted several other periods may arise. These additional periods do not necessarily require a loop at $v_1$ (\eg \fig{loopv2}).

\begin{thm}\label{tfree}If no two distinct neighbours of $v_1$ are adjacent, and loops are not permitted, then for any update rule at $v_1$ the system reaches a $1$-, $2$- or $4$-cycle.\end{thm}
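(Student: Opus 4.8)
The plan is to reduce the whole system to the behaviour of a single bit --- the opinion of $v_1$ --- and to show that this bit obeys, for $t$ large, a first-order recursion whose transition rule depends only on the parity of $t$. Any such sequence is eventually periodic with period dividing $4$, and the rest of the graph will inherit this bound.

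First I would apply \theorem{lyap}: for all sufficiently large $t$, the only vertices whose opinion can differ between times $t-1$ and $t+1$ are $v_1$ and the neighbours of $v_1$. Let $W$ be the set of vertices that are neither $v_1$ nor adjacent to $v_1$; then $U^*_t\cap W$ is eventually a function of the parity of $t$ alone. (Here $v_1\notin N_1$ because the graph is loopless, but $v_1$ carries an arbitrary rule, so one should not expect $v_1$ itself to be periodic with period dividing $2$; what matters is only that $v_1$'s own opinion is not an input to its update.)

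Next I would exploit triangle-freeness. If $v_i\in N_1$ then $v_i\notin N_i$ (no loops) and no neighbour of $v_i$ other than $v_1$ can lie in $N_1$ (else two neighbours of $v_1$ would be adjacent), so $N_i\subseteq\{v_1\}\cup W$. Hence for $t$ large the number $\abs{N_i\cap U^*_t}$ --- and therefore, via the threshold rule of $v_i$, the opinion of $v_i$ at time $t+1$ --- is a fixed function of the bit ``$v_1\in U^*_t$'' together with the parity of $t$. Writing $b(t)$ for that bit, and now using that the (arbitrary) rule of $v_1$ reads only the opinions of vertices of $N_1$, I would deduce that for $t$ large $b(t)$ is a fixed function of $b(t-2)$ and the parity of $t$; schematically $b(t)=G_{t\bmod2}\bigl(b(t-2)\bigr)$ for some maps $G_0,G_1\colon\{0,1\}\to\{0,1\}$. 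An iterated self-map of $\{0,1\}$ is eventually periodic with period $1$ or $2$ (period $2$ only when it is $x\mapsto1-x$), so the even- and odd-indexed subsequences of $b$ each have eventual period dividing $2$; hence $b$ has eventual period dividing $4$.

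To close, the vertices of $W$ have eventual period dividing $2$, the vertex $v_1$ has the eventual period of $b$, and each neighbour of $v_1$ has opinion determined by $b$ and by a parity, so has eventual period dividing $4$. Since the dynamics is deterministic on a finite state space it is eventually periodic, and we have shown this period divides $4$; thus the system reaches a $1$-, $2$- or $4$-cycle. The step I expect to be the real obstacle is the third one: confirming that $b$ genuinely satisfies a first-order, parity-dependent recursion. That is precisely where both hypotheses are used --- triangle-freeness so that neighbours of $v_1$ do not see one another, looplessness so that $v_1$ does not see itself --- and it is also the point at which one must ensure that all the ``for $t$ sufficiently large'' clauses coming from \theorem{lyap} can be taken to hold from one common time on; a loop near $v_1$, or a triangle through $v_1$, would raise the order of this recursion and permit longer periods.
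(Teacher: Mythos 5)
Your proposal is correct and is essentially the paper's own argument: both use \theorem{lyap} to make the non-neighbours of $v_1$ parity-determined, then use triangle-freeness and looplessness to show each neighbour of $v_1$ (and hence $v_1$ itself) obeys a parity-dependent first-order recursion in $v_1$'s bit, forcing $v_1$ to repeat with period dividing $4$ and the whole system to settle into a $1$-, $2$- or $4$-cycle.
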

\begin{proof}By \theorem{lyap}, for sufficiently large $t$ and any fixed vertex $v_i$ which is neither $v_1$ nor adjacent to it, the opinion of $v_i$ is a function of the parity of $t$. If $v_i$ is a neighbour of $v_1$ then the opinion of $v_i$ at any sufficiently large time $t$ depends on the state of the neighbours of $v_i$ at time $t-1$; except for the state of $v_1$ at time $t-1$, all of these depend only on the parity of $t$. Likewise the state of $v_1$ at time $t+1$ depends only on the state of its neighbours at time $t$, which in turn depends only on the state of $v_1$ at time $t-1$ and the parity of $t$. Consequently either $v_1$ is in the same state for every sufficiently large odd $t$ or it alternates between states in successive odd $t$, and the same possibilities apply to sufficiently large even $t$. It follows that $v_1$ is in the same state at times $t$ and $t+4$ for sufficiently large $t$, and therefore that each $v_i$ in the neighbourhood of $v_1$ is in the same state at times $t+1$ and $t+5$. Therefore the state of every vertex repeats after four time steps and the system is in a fixed point or $2$- or $4$-cycle.\end{proof}

\begin{thm}\label{loops}If no two distinct neighbours of $v_1$ are adjacent but loops are permitted, then for any update rule at $v_1$ the system reaches a $1$-, $2$-, $3$-, $4$-, $6$-, $8$-, $10$- or $12$-cycle.\end{thm}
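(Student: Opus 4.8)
The plan is to use \theorem{lyap} to collapse the long-term behaviour onto a finite automaton that tracks the opinion of $v_1$ at time $t$, written $a_t$, together with a bounded amount of extra data, and then to enumerate the cycle lengths of that automaton.

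\emph{Reduction to a core system.} By \theorem{lyap}, for all sufficiently large $t$ every vertex other than $v_1$ and its neighbours is $2$-periodic. Since no two distinct neighbours of $v_1$ are adjacent, a neighbour $w$ of $v_1$ has, apart from possibly a loop at itself and the edge to $v_1$, only $2$-periodic neighbours; hence for large $t$ the opinion of $w$ at time $t+1$ is a function of $a_t$, of the opinion of $w$ at time $t$ (which matters only when $w$ has a loop), and of the parity of $t$. Likewise $a_{t+1}$ is a function of the opinions of the neighbours of $v_1$ at time $t$ and, when there is a loop at $v_1$, of $a_t$. So for large $t$ the system is driven entirely by $v_1$ and its neighbours under a $2$-periodic external forcing.

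\emph{Shape of the neighbour updates.} Because $w$ has a threshold rule and the number of its active $2$-periodic neighbours is itself $2$-periodic, the effective update of $w$ from an even (respectively odd) source time, regarded as a function of (opinion of $v_1$, opinion of $w$), is a constant, or ``copy the opinion of $v_1$'', or ``$v_1$ OR $w$'', or ``$v_1$ AND $w$'' (the last two only when $w$ has a loop). In every case it is monotone in the opinion of $w$, so composing the single-step maps around one period of the driving pair (opinion of $v_1$, parity) again gives a monotone map, never the ``swap'' map; hence $w$ returns to its original opinion after one such period, and its eventual period divides $\operatorname{lcm}(P_1,2)$, where $P_1$ is the eventual period of $a$. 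Consequently the eventual period of the whole system is $P_1$ if $P_1$ is even, and $P_1$ or $2P_1$ if $P_1$ is odd. If every vertex other than $v_1$ is eventually constant, there is no parity dependence, so $a_{t+1}$ is a fixed function of $a_t$ and $a_{t-1}$ and hence $P_1\leqslant 4$; in particular $P_1=5$ is possible only when forcing is present, in which case the period is $10$. It therefore suffices to show that, when forcing is present, $P_1\in\{1,2,3,4,5,6,8,10,12\}$.

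\emph{Reduction to a bounded automaton.} A neighbour whose two effective updates are both OR, or both AND, or either of which is constant, is eventually constant or eventually equal to $a_{t-1}$, and so contributes no new state; the remaining ``active'' neighbours are loopy ones performing OR on one parity of source time and AND on the other. Any two active neighbours of the same parity-type obey the same one-bit recursion driven by $a$, and this recursion is reset to $-1$ whenever $v_1$ has opinion $-1$ at the relevant time, so after the first such reset all active neighbours of that type agree. Thus two auxiliary bits $u^{+},u^{-}$ suffice, and over each pair of consecutive time steps the quadruple consisting of the two opinions of $v_1$ and the bits $u^{+},u^{-}$ updates deterministically; moreover the resets force monotone relations --- for example, at an even time an active OR--AND neighbour has opinion at most $a$ at the previous (odd) time --- which confine the quadruple to a proper subset. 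We are left with a finite automaton of small bounded size whose cycles control $P_1$.

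\emph{Extracting the list: the main obstacle.} It remains to enumerate the cycles of this block automaton. I would split into cases according to whether $v_1$ has a loop and which of $u^{+},u^{-}$ is non-trivial, and in each case verify that a cycle forces $a$ to have period in $\{1,2,3,4,5,6,8,10,12\}$; the delicate points are to exclude a cycle of length $7$ (which would force period $7$ or $14$) and, more generally, any cycle forcing a period divisible by a prime at least $7$ or a period exceeding $12$. This last analysis is where essentially all the difficulty lies; everything before it is bookkeeping with \theorem{lyap} and the monotonicity of threshold updates. Finally, small examples (such as \fig{loopv2}) realise periods beyond $1$, $2$ and $4$, so the longer list is genuinely necessary.
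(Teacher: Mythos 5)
Your reduction is essentially the paper's own setup in different clothing: your classification of a neighbour's effective update as constant, ``copy $v_1$'', ``$v_1$ OR $w$'' or ``$v_1$ AND $w$'' per parity is exactly the paper's case analysis, your synchronised bits $u^{+},u^{-}$ (together with the copy-type neighbours, which are slaved to $a_{t-1}$) are the paper's eventually monochromatic sets $X_j$, $Y_j$, $Z_j$ (with $Z_j=X_{1-j}$), and your ``monotone relations confining the quadruple to a proper subset'' is the paper's ordering $x\geqslant y\geqslant z$ among those sets at each parity. Up to that point the proposal is sound, apart from loose ends that are easy to repair (the synchronising ``reset'' need not be to $-1$ --- either value of $a$ at the appropriate parity synchronises, and one must use that $a$ is not ultimately alternating to guarantee a reset ever happens; and your no-forcing case conflates ``no parity dependence'' with ``every vertex other than $v_1$ eventually constant'').

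The genuine gap is that the proof stops exactly where the theorem's content lies. Your setup, as it stands, only yields a deterministic map on a state space of size at most $16$ per parity (or $8$ after the monotonicity constraint), which bounds the eventual period by $16$ and excludes nothing in $\{5,7,9,11,13,14,16,\dots\}$; you yourself defer the cycle enumeration (``I would split into cases \dots this last analysis is where essentially all the difficulty lies''), so periods such as $7$, $14$ or $16$ are never ruled out and the stated list is not established. The paper closes this with two specific arguments you would need: first, of the eight feasible states at (say) even times, six are mapped in one step to states that are monochromatic on $X_1\cup Y_1\cup Z_1$, so at most six states recur at odd times and the period is at most $12$ (and at most $12$ also when some class is empty); second, if the period $p$ is odd then every recurrent state occurs at both parities, and a chain of inclusions using $Z_1=X_0$ forces every recurrent state to be monochromatic on $X_0\cup Y_0\cup Z_0\cup X_1\cup Y_1\cup Z_1$, leaving only four states and hence odd period $1$ or $3$. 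Without these (or substitutes for them), the argument proves only ``period at most $16$'', not the theorem; note also that your target list for $P_1$ includes $5$, so you additionally rely on the unproved claim that $P_1=5$ forces an alternating vertex and hence overall period $10$, which must be argued rather than asserted.
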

\begin{proof}First we deal with the case where the state of $v_1$ is either ultimately constant or ultimately alternating. In that case, for fixed $i$ and sufficiently large $t$, the state of $v_i$ at time $t$ depends only on the parity of $t$ and the state of $v_i$ at time $t-1$. Moreover, since it plays a threshold rule, changing its state at $t-1$ cannot change its state at $t$ in the opposite direction. So either at all sufficiently large odd $t$ it is a fixed state, or at all sufficiently large odd $t$ it is the same state as at $t-1$, and likewise for even $t$. Consequently it is either a fixed state or alternating in state for sufficiently large $t$. Since this is true for every vertex, the system has period 1 or 2. 

Now suppose that the state of $v_1$ is neither ultimately constant nor ultimately alternating. Fix $1<i\leqslant d$, then one of the following is the case for all sufficiently large even $t$, and one is the case for all sufficiently large odd $t$:
\begin{enumerate}[label=(\arabic*)]
\item $v_i\in U_t$;
\item $v_i\in N_i$ and $v_i\in U_t$ iff $v_1\in U_{t-1}$ or $v_i\in U_{t-1}$;
\item $v_i\notin N_i$ and $v_i\in U_t$ iff $v_1\in U_{t-1}$;
\item $v_i\in N_i$ and $v_i\in U_t$ iff $v_1\in U_{t-1}$ and $v_i\in U_{t-1}$;
\item $v_i\notin U_t$.
\end{enumerate}

There are then 21 possibilities for which pair of these rules applies (since it is not possible for (2) or (4) to apply at one parity and (3) at the other). In some cases the behaviour may be simplified. If (1) applies at one parity and (2) at the other, or if (2) applies at both, then in fact (since we are assuming $v_1$ is not ultimately constant) $v_i\in U_t$ for all sufficiently large $t$. Similarly if (4) and (4) or (4) and (5) apply then $v_i\notin U_t$ for all sufficiently large $t$. If (1) applies at one parity and (4) at the other, or (5) at one and (2) at the other, then in fact for the second parity $v_i\in U_t$ iff $v_1\in U_{t-1}$.

For $j\in\{0,1\}$, write $X_j$ for the set of vertices which satisfy (2) for $t\equiv j$ mod 2 and (4) for $t\not\equiv j$. Write $Y_j$ for the set of vertices which satisfy (3) for $t\equiv j$, or which satisfy (2) for $t\equiv j$ and (5) for $t\not\equiv j$, or which satisfy (4) for $t\equiv j$ and (1) for $t\not\equiv j$; we observed above that all such vertices satisfy $v_i\in U_t$ iff $v_1\in U_{t-1}$ for $t\equiv j$. Write $Z_j$ for the set of vertices which satisfy (4) for $t\equiv j$ and (2) for $t\not\equiv j$; note that $Z_j=X_{1-j}$.

If $i\neq 1$ and $v_i\not\in X_0\cup Y_0\cup Z_0$ then $v_i$ is a fixed state for all sufficiently large even $t$. Since we are assuming $v_1$ is not alternating, eventually $v_1\in U_{t-1}$ for some even $t$ or $v_1\not\in U_{t-1}$ for some odd $t$; in either case all vertices in $X_0$ will be the same state at time $t$, and this will remain true at all future times. We will refer to a set being \textit{monochromatic} if all its vertices have the same state; similarly $Y_0$ and $Z_0$ are each monochromatic for all sufficiently large $t$. Also note that, if $t$ is even, then $Z_0\subseteq U_t\Rightarrow Y_0\subseteq U_t \Rightarrow X_0\subseteq U_t$. So there are only eight possible states which occur at arbitrarily large even $t$: either $X_0\cup Y_0\cup Z_0\subseteq U_t$, $X_0\cup Y_0\subseteq U_t$ but $Z_0\subseteq U_t\cmp$, $X_0\subseteq U_t$ but $Y_0\cup Z_0\subseteq U_t\cmp$, or $X_0\cup Y_0\cup Z_0\subseteq U_t\cmp$, for four possibilities, and we may have $v_1\in U_t$ or $v_1\in U_t\cmp$. The states must therefore repeat after at most 16 steps.

In fact not all of these can occur infinitely often. If any of the sets defined above are empty then for some parity of $t$ there are at most 6 possible states, so the period is at most 12. Now suppose that they are all non-empty. If $t$ is sufficiently large we may represent the state of the system by a vector in $\{0,1\}\times\{-1,+1\}^4$, with $\boldsymbol{s}=(j,x,y,z,w)$ representing the state where $t\equiv j$ and $X_j$, $Y_j$, $Z_j$ and $v_1$ having states $x$, $y$, $z$ and $w$ respectively. We know that we are further restricted to states for which $x\geqslant y\geqslant z$. There is some function $f$ mapping, for sufficiently large $t$, the state at time $t$ to the state at time $t+1$. Note that if $\boldsymbol{s}$ is $(0,-1,-1,-1,-1)$, $(0,+1,-1,-1,-1)$, $(0,+1,-1,-1,+1)$, $(0,+1,+1,-1,-1)$, $(0,+1,+1,-1,+1)$ or $(0,+1,+1,+1,+1)$ then $f(\boldsymbol{s})$ is of the form $(1,x,x,x,w)$ for some $x$ and $w$. Consequently the image under $f$ of the set of vectors representing states at even time has size at most $6$ (the above four possibilities together with $f((0,-1,-1,-1,+1))$ and $f((0,+1,+1,+1,-1))$), so at most six states occur at sufficiently large odd time and the period is at most $12$.

If the period, $p$, is odd, then every state which occurs infinitely often does so both at both odd times and even times. So if $t$ is sufficiently large and even, 
\[
Z_0\subseteq U_t\Rightarrow X_0\cup Y_0\subseteq U_t\ \,,
\]
and, since $X_0=Z_1$,
\begin{align*}
Z_0\subseteq U_t &\Rightarrow Z_1\subseteq U_t \\
&\Rightarrow Z_1\subseteq U_{t+p} \\
&\Rightarrow X_1\cup Y_1\subseteq U_{t+p} \\
&\Rightarrow X_1\cup Y_1\subseteq U_t \,.
\end{align*}
Similarly, 
\begin{align*}
Z_0\subseteq U_t\cmp &\Rightarrow X_1\subseteq U_t\cmp \\
&\Rightarrow X_1\subseteq U_{t+p}\cmp \\
&\Rightarrow Y_1\cup Z_1\subseteq U_{t+p}\cmp \\
&\Rightarrow Y_1\cup Z_1\subseteq U_t\cmp \,,
\end{align*}
and, since $X_0=Z_1$,
\begin{align*}
Z_0\subseteq U_t\cmp &\Rightarrow X_0\subseteq U_t\cmp \\
&\Rightarrow Y_0\subseteq U_t\cmp \,.
\end{align*}
Consequently, when $p$ is odd, every state which occurs infinitely often is mono\-chromatic on $X_0\cup Y_0\cup Z_0\cup X_1\cup Y_1\cup Z_1$. There are therefore only 4 possible states, depending on the states of that set and $v_1$, so the only possible odd periods are $1$ and $3$.\end{proof}

\section{Minority rule in general graphs}

In this section we show that natural restrictions on the behaviour of the nonconforming vertex give a finite set of possible periods without any restriction on the graph (other than that the system is finite). Suppose $v_1$ obeys an anti-threshold rule. We consider the possible sequences of states of $v_1$, and show that, once the system has reached a recurrent state, only a few sequences are possible. As a result, we show that only a few different periods can arise from such a system: $1$, $2$, $4$, $5$, $6$ and $10$ if loops are not permitted, and the same periods with the addition of $3$ and $8$ if loops are permitted (in fact a loop at $v_1$ must be present to obtain either of these periods). Throughout this section we assume that the system has already reached a recurrent state, and write $c_t$ for the state of $v_1$ at time $t$. Recall that in this case $\niu{t+1}=r_i-1$ for all $v_i\in\uxd t{t+2}$, and so $\uxle t{t+2}$ if $c_{t+1}=+1$ but $\uxge t{t+2}$ if $c_{t+1}=-1$.

\begin{lem}\label{bwb}If $\bseq[t]c{t+2}=(x,-x,x)$ then $\uxeq{t+1}{t+3}$.\end{lem}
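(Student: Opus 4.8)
The plan is to track the set $U^*_t$ through three consecutive steps, using the sign pattern $\bseq[t]c{t+2}=(x,-x,x)$ together with the consequence of \theorem{lyap} recorded just before the statement. By symmetry (swapping $+1\leftrightarrow-1$ everywhere reverses the roles of threshold and co-threshold but the argument is symmetric in structure), I may as well assume $x=+1$, so that $c_{t+1}=-1$, $c_t=c_{t+2}=+1$. I want to show $\uxeq{t+1}{t+3}$, i.e.\ that every vertex other than $v_1$ has the same opinion at times $t+1$ and $t+3$.

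First I would apply the recurrent-state fact in the two relevant directions. Since $c_{t+1}=-1$ we get $\uxge t{t+2}$; since $c_{t+2}=+1$ we get $U^*_{t+1}\subseteq U^*_{t+3}$ (both as statements about all vertices $v_i\neq v_1$, with the possible exception of $v_1$ itself — I would note at the outset that the lemma is only claiming equality off $v_1$, matching the $\bigtriangleup$ in the $\uxeq{}{}$ notation). So it remains to prove the reverse containment $U^*_{t+3}\setminus\{v_1\}\subseteq U^*_{t+1}$. Equivalently: if $v_i\notin U^*_{t+1}$ and $v_i\neq v_1$, I must show $v_i\notin U^*_{t+3}$. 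The idea is that $v_i\notin U^*_{t+1}$ means $\niu t<r_i$, and I want to bootstrap this to $\niu{t+2}<r_i$. Here the hypothesis $U^*_t\supseteq U^*_{t+2}\setminus\{v_1\}$ is what does the work: every neighbour of $v_i$ other than $v_1$ that is $+1$ at time $t+2$ was already $+1$ at time $t$, so $|N_i\cap U^*_{t+2}|\leqslant |N_i\cap U^*_t| + [v_1\in N_i]$. If $v_1\notin N_i$ this immediately gives $\niu{t+2}\leqslant\niu t<r_i$, hence $v_i\notin U^*_{t+3}$. The only troublesome case is $v_i\in N_1$, where the count could go up by one.

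The crux is therefore the case $v_i\in N_1$ with $v_i\notin U^*_{t+1}$, where I need to rule out $\niu{t+2}=r_i$ exactly (it cannot exceed $r_i$ by the displayed inequality). I expect this to be the main obstacle, and the resolution should use the ``tie-breaking'' remark from the proof of \theorem{lyap}: when $v_i\in N_1$ and $\niu{t+2}=s_i=r_i-1$, the opinion of $v_i$ at time $t+3$ equals that of $v_1$ at time $t+2$, namely $+1$. But separately, $v_i\notin U^*_{t+1}$ combined with $v_i\in N_1$ and the recurrent-state identity (applied to the step from $t$ to $t+2$, or by re-examining which vertices can lie in $U^*_{t+2}\setminus U^*_t$) should pin down that $v_i$ cannot have jumped from $-1$ at $t+1$ to $+1$ at $t+3$ unless its neighbour count strictly increased past the threshold — and the gain of exactly one from $v_1$ lands it only at $r_i-1<r_i$, so it still does not reach the genuine threshold $r_i$ needed to be $+1$ at time $t+3$. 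I would carefully check the boundary arithmetic: $\niu{t+2}\leqslant r_i-1$ in this case, so $v_i\notin U^*_{t+3}$, completing the reverse inclusion and hence the equality off $v_1$. Finally I would remark that the $x=-1$ case follows by the same argument with every opinion negated and ``threshold $r_i$'' replaced by its complement, or simply note the symmetry of the statement of \theorem{lyap} under this exchange.
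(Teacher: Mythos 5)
Your overall strategy is the same as the paper's: one inclusion, $\uxle{t+1}{t+3}$, comes straight from the recurrent-state fact applied with $c_{t+2}=x$, and the reverse inclusion should come from monotonicity of the threshold vertices given $\uxge t{t+2}$. However, your handling of the one nontrivial case, $v_i\in N_1$, has a genuine gap: nowhere do you use the hypothesis $c_t=x$, and without it the reverse inclusion is false in general (if $v_1$ were $-1$ at time $t$ but $+1$ at time $t+2$, a neighbour of $v_1$ could gain a positive neighbour and jump above its threshold). Concretely, your final boundary claim --- that $\niu{t+2}\leqslant r_i-1$ already forces $v_i\notin U^*_{t+3}$ --- contradicts the tie-breaking remark you yourself quote: for $v_i\in N_1$ with $c_{t+2}=+1$, the value $\niu{t+2}=r_i-1$ gives $v_i$ a total of $r_i$ positive neighbours at time $t+2$, hence $v_i\in U^*_{t+3}$. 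What you actually need in that case is $\niu{t+2}\leqslant r_i-2$, and your argument never establishes it; the preceding ``gain of exactly one from $v_1$'' reasoning is circular, since whether $v_i$ can flip up is exactly what is in question.

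The missing observation is precisely where $c_t=x$ enters, and it removes the case split entirely: since $c_t=c_{t+2}=+1$, the vertex $v_1$ lies in both $U_t$ and $U_{t+2}$, so $\uxge t{t+2}$ upgrades to $U_t\supseteq U_{t+2}$ for the full sets. Hence every $v_i$ with $i\neq 1$ has at least as many neighbours in $U_t$ as in $U_{t+2}$, so $\abs{N_i\cap U_{t+2}}\geqslant r_i$ implies $\abs{N_i\cap U_t}\geqslant r_i$, i.e.\ $U^*_{t+3}\subseteq U^*_{t+1}$, with no exceptional case for neighbours of $v_1$; in particular your displayed inequality with the $[v_1\in N_i]$ correction term is needlessly weak and is what manufactures the spurious boundary case. (Equivalently, staying within your set-up: $c_t=+1$, $v_1\in N_i$ and $v_i\notin U^*_{t+1}$ force $\niu t\leqslant r_i-2$, and $U^*_{t+2}\subseteq U^*_t$ transfers this bound to time $t+2$.) This one-line repair is exactly the paper's proof.
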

\begin{proof}Without loss of generality we assume $x=+1$. Then $\uxge t{t+2}$, and consequently $\uxge{t+1}{t+3}$, since every vertex has at least as many edges to $U_t$ as to $U_{t+2}$. But also $\uxle{t+1}{t+3}$ since $c_{t+2}=+1$.\end{proof}

\begin{lem}\label{loop12}If $\bseq[t]c{t+4}=(x,-x,x,x,-x)$ then $v_1$ has a loop.\end{lem}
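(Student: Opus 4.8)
The plan is to obtain the claim directly from \lemma{bwb}. Since $\bseq[t]c{t+2}=(x,-x,x)$, \lemma{bwb} applies and yields $\uxeq{t+1}{t+3}$; that is, every vertex other than $v_1$ holds the same opinion at time $t+1$ as at time $t+3$.

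Suppose for a contradiction that $v_1$ has no loop. Then $v_1\notin N_1$, so $N_1\subseteq\{\seq[2]vn\}$, and hence the opinion of $v_1$ at any time $\tau+1$ --- which is $+1$ if and only if $N_1\cap U_\tau\in\mathcal{S}_1$ --- depends only on the opinions of the remaining vertices at time $\tau$. Applying this with $\tau=t+1$ and with $\tau=t+3$, and using $\uxeq{t+1}{t+3}$ to conclude $N_1\cap U_{t+1}=N_1\cap U_{t+3}$, we obtain $c_{t+2}=c_{t+4}$. But the hypothesis gives $c_{t+2}=x$ and $c_{t+4}=-x$, so $x=-x$, which is impossible. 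Hence $v_1$ has a loop.

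There is essentially no obstacle here, since the substantive part is already done in \lemma{bwb}; the only thing to keep straight is that the conclusion of that lemma concerns the vertices obeying threshold rules, and that a loopless $v_1$ cannot read its own previous opinion, so that agreement of the configuration on $\{\seq[2]vn\}$ at times $t+1$ and $t+3$ propagates to agreement of $v_1$'s opinion at times $t+2$ and $t+4$. (We never use the entry $c_{t+3}=x$ of the hypothesis: the weaker pattern $(x,-x,x,\ast,-x)$ already forces a loop.)
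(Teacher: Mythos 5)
Your argument is correct and is essentially the paper's own proof: both apply Lemma~\ref{bwb} to get $U^*_{t+1}=U^*_{t+3}$ and then observe that $c_{t+2}\neq c_{t+4}$ forces $N_1$ to distinguish the states at times $t+1$ and $t+3$, which differ only at $v_1$, so $v_1\in N_1$. Your contrapositive phrasing (a loopless $v_1$ would see identical neighbourhoods at those two times and hence repeat its opinion) is just a slightly more explicit rendering of the same step.
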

\begin{proof}By \lemma{bwb}, $\uxeq{t+1}{t+3}$. Since $c_{t+4}\neq c_{t+2}$ we must have $N_1\cap U_{t+4}\neq N_1\cap U_{t+2}$. Since $U_{t+4}\bigtriangleup U_{t+2}=\{v_1\}$, we must have $v_1\in N_1$.\end{proof}

\begin{lem}\label{wbb}If $\bseq[t+1]c{t+3}=(-x,x,x)$ then $\uxle t{t+4}$ if $x=+1$ and $\uxge t{t+4}$ if $x=-1$.\end{lem}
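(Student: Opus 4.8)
The plan is to reduce to $x=+1$ by the opinion-reversal symmetry (exactly as in the proof of \lemma{bwb}), so that $\bseq[t+1]c{t+3}=(-1,+1,+1)$ and the goal becomes $\uxle t{t+4}$. First I would record the two inclusions supplied by the facts recalled at the start of this section, applied with the time index shifted: since $c_{t+2}=+1$ we get $\uxle{t+1}{t+3}$, and since $c_{t+3}=+1$ we get $\uxle{t+2}{t+4}$. (The hypothesis $c_{t+1}=-1$ only serves to pin down this sign pattern.)

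Next, fix an arbitrary $v_i\in U^*_t$ with $i\neq1$; the aim is to show $v_i\in U^*_{t+4}$. If $v_i\in U^*_{t+2}$ this is immediate from $\uxle{t+2}{t+4}$. Otherwise $v_i\in U^*_t\setminus U^*_{t+2}\subseteq\uxd{t}{t+2}$, so \theorem{lyap} gives $v_i\in N_1$ and $\niu{t+1}=r_i-1$. Using $\uxle{t+1}{t+3}$ to push this count forward, $\niu{t+3}\geqslant\niu{t+1}=r_i-1$. Now $v_i\in N_1$ and $c_{t+3}=+1$, so $v_1\in N_i\cap U_{t+3}$ while $v_1\notin U^*_{t+3}$; hence $\abs{N_i\cap U_{t+3}}\geqslant\niu{t+3}+1\geqslant r_i$, and since $v_i$ obeys a threshold rule this forces $v_i\in U_{t+4}$, \ie $v_i\in U^*_{t+4}$. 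This establishes $\uxle t{t+4}$, and the reversed inclusion for $x=-1$ follows by symmetry.

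The step that genuinely uses the strength of \theorem{lyap} — and the reason one cannot simply chain $\uxge{t}{t+2}$ with $\uxle{t+2}{t+4}$, which yields nothing — is the treatment of the vertices in $U^*_t\setminus U^*_{t+2}$ that temporarily leave $U^*$ and so cannot be controlled by monotonicity alone. I expect this to be the only real obstacle; the structural fact that every such vertex is a neighbour of $v_1$ lying exactly one below its threshold at time $t+1$ is precisely what lets a single $+1$ from $v_1$ at time $t+3$ draw it back in at time $t+4$. Beyond that, the only care needed is bookkeeping the distinction between $\abs{N_i\cap U_{s}}$ (which counts $v_1$) and $\niu{s}$ (which does not).
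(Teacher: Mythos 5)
Your proof is correct and follows essentially the same route as the paper's: reduce to $x=+1$, use $c_{t+2}=c_{t+3}=+1$ to get $\uxle{t+1}{t+3}$ and $\uxle{t+2}{t+4}$, and handle the vertices of $\uxd t{t+2}$ via \theorem{lyap}, noting that such a $v_i$ is a neighbour of $v_1$ with $\niu{t+1}=r_i-1$, so the inclusion $\uxle{t+1}{t+3}$ together with $v_1\in U_{t+3}$ pushes it into $U^*_{t+4}$. The only difference is presentational: you make explicit the case split over $v_i\in U^*_{t+2}$ or not and the final counting step $\abs{N_i\cap U_{t+3}}\geqslant\niu{t+3}+1\geqslant r_i$, which the paper leaves implicit.
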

\begin{proof}We prove the case $x=+1$; the second case is equivalent by swapping the states. Since $c_{t+2}=+1$, $\uxge{t+3}{t+1}$. Similarly $\uxge{t+4}{t+2}$. If $v_i\in\uxd t{t+2}$ then $v_i$ has $r_i-1$ edges to vertices in $U^*_{t+1}$, so at least $r_i-1$ edges to vertices in $U^*_{t+3}$, and so $v_i\in U^*_{t+4}$. Therefore $\uxge{t+4}t$.\end{proof}

\begin{lem}\label{no4}If $\bseq[t+1]c{t+4}=(-x,x,x,x)$ then $c_{t+5}=-x$.\end{lem}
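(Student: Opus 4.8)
The plan is to show that the set of vertices holding opinion $+1$ cannot shrink over the four steps from time $t$ to time $t+4$, and then to exploit the fact that $v_1$ obeys an anti-threshold rule: a larger $+1$-set inside $N_1$ can only make $v_1$ more inclined to choose $-1$, so since $c_{t+1}=-x$ the same must hold at time $t+5$.

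By exchanging the two opinions we may assume $x=+1$, so $\bseq[t+1]c{t+4}=(-1,+1,+1,+1)$ and the goal is $c_{t+5}=-1$. First I would apply \lemma{wbb} to the subwindow $\bseq[t+1]c{t+3}=(-1,+1,+1)$, which gives $\uxle t{t+4}$. Since $v_1$ plays an anti-threshold rule with threshold $r_1$ and $c_{t+1}=-1$, the defining inequality reads $\abs{N_1\cap U_t}\geqslant r_1$; and $c_{t+5}=-1$ is precisely the statement $\abs{N_1\cap U_{t+4}}\geqslant r_1$. So it suffices to prove $\abs{N_1\cap U_{t+4}}\geqslant\abs{N_1\cap U_t}$, and for this I would establish $N_1\cap U_t\subseteq N_1\cap U_{t+4}$: for every neighbour of $v_1$ other than $v_1$ itself, membership in $U_t$ implies membership in $U_{t+4}$ by $\uxle t{t+4}$, and the only remaining case, namely $v_1$ having a loop and lying in $U_t$, is covered since $c_{t+4}=+1$ forces $v_1\in U_{t+4}$ as well. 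Hence $\abs{N_1\cap U_{t+4}}\geqslant\abs{N_1\cap U_t}\geqslant r_1$, giving $c_{t+5}=-1=-x$.

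The substance of the argument really lies in \lemma{wbb}, which already supplies the monotone growth of the $+1$-set across this window; given that, the statement is a short deduction. The only point that needs a little care is the comparison of $\abs{N_1\cap U_t}$ with $\abs{N_1\cap U_{t+4}}$ in the presence of a possible loop at $v_1$, and that is dispatched using the hypothesis $c_{t+4}=+1$; I do not anticipate any genuine obstacle beyond handling that case cleanly.
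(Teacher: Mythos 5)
Your proof is correct and takes essentially the same approach as the paper: after reducing to $x=+1$, apply \lemma{wbb} to get $\uxle t{t+4}$, upgrade this to $U_t\subseteq U_{t+4}$ (your loop case is exactly the paper's observation that $c_{t+4}=+1$ puts $v_1\in U_{t+4}$), and conclude $c_{t+5}=-1$ from the anti-threshold rule at $v_1$. The only difference is that you spell out the comparison $\abs{N_1\cap U_t}\leqslant\abs{N_1\cap U_{t+4}}$ explicitly, which the paper leaves implicit.
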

\begin{proof}Again we may assume $x=+1$. Then $\uxge{t+4}t$ by \lemma{wbb} and since $v_1\in U_{t+4}$, $U_{t+4}\supseteq U_t$. Since $v_1$ obeys an anti-threshold rule, $c_{t+5}\leq c_{t+1}=-1$.\end{proof}

\begin{lem}\label{any21}If $\bseq[t+1]c{t+5}=(-x,x,x,-x,x)$ then $U_{t+6}=U_t$.\end{lem}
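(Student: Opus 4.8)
The plan is to use the three previous lemmas (\lemma{bwb}, \lemma{wbb}, \lemma{no4}) to pin down the states $U_{t+1},\ldots,U_{t+6}$ in terms of $U_t$, and then to show $U_{t+6}=U_t$ directly. Without loss of generality take $x=+1$, so the $v_1$-sequence on $[t+1,t+5]$ reads $(-1,+1,+1,-1,+1)$. First I would apply \lemma{wbb} to the window at $t+1$ (with the triple $(c_{t+1},c_{t+2},c_{t+3})=(-1,+1,+1)$, matching $(-x,x,x)$ with $x=+1$): this gives $\uxle{t}{t+4}$, \ie $U_t\setminus U_{t+4}\subseteq\{v_1\}$ after accounting for $v_1$'s own state. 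Since $c_t$ is whatever it is and $c_{t+4}=-1$, I can read off that $U_t\cap N_i\subseteq U_{t+4}\cap N_i$ for all $i\neq 1$, i.e. the non-$v_1$ part of $U_{t+4}$ dominates that of $U_t$.

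Next I would push forward one more step. Since $c_{t+4}=-1$ and $c_{t+5}=+1$, the standard fact recalled at the start of Section~4 ($\uxge{t+3}{t+5}$ when $c_{t+4}=-1$) applies, and combined with the inclusion just obtained I want to conclude $U_{t+5}\subseteq U_{t+1}$ or the reverse. The cleanest route is probably to run the Lyapunov-type comparison (every vertex $v_i\neq v_1$ has at least as many neighbours in $U_{t+4}$ as in $U_t$, hence at least as many in $U_{t+5}$ as in $U_{t+1}$, so $v_i\in U_{t+1}\Rightarrow v_i\in U_{t+5}$), giving $U_{t+1}\subseteq U_{t+5}$ on $V\setminus\{v_1\}$; since $c_{t+1}=c_{t+5}=-1$ wait — $c_{t+5}=+1\neq c_{t+1}$, so I must handle $v_1$ separately, but $v_1\notin U_{t+1}$ makes the inclusion $U_{t+1}\subseteq U_{t+5}$ hold outright. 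Then one further application: $c_{t+5}=+1$ gives $\uxle{t+4}{t+6}$, and monotonicity from $U_{t+1}\subseteq U_{t+5}$ propagated through the threshold rules should force $U_{t+6}\subseteq U_{t+2}$, while the reverse inclusion comes from chaining $\uxeq{t+1}{t+3}$ (via \lemma{bwb} applied at $t+1$, using $(c_{t+1},c_{t+2},c_{t+3})=(-1,+1,+1)$ — hmm, \lemma{bwb} needs the pattern $(x,-x,x)$, which is $(c_t,c_{t+1},c_{t+2})$; let me instead apply \lemma{bwb} at $t+2$ with $(c_{t+2},c_{t+3},c_{t+4})=(+1,+1,-1)$, which is not the right shape either). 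I would apply \lemma{bwb} at $t+3$: $(c_{t+3},c_{t+4},c_{t+5})=(+1,-1,+1)=(x,-x,x)$, yielding $\uxeq{t+4}{t+6}$, \ie $U_{t+6}\triangle U_{t+4}\subseteq\{v_1\}$.

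Assembling: from $\uxeq{t+4}{t+6}$ the two sets agree off $v_1$, and $c_{t+5}=+1$ determines $v_1$'s state at $t+6$ from $N_1\cap U_{t+5}$; I then need $N_1\cap U_{t+5}=N_1\cap U_{t+1}$ (which would give $c_{t+6}=c_{t+2}=+1$) together with $U_{t+6}\setminus\{v_1\}=U_{t+2}\setminus\{v_1\}$. The latter two should both follow once I establish that all the inclusions above are in fact equalities, which is forced because the sequence returns: $U_{t+1}\subseteq U_{t+5}$ off $v_1$, and running the symmetric argument on the complementary window (using $c_{t+4}=-1$, $c_{t+5}=+1$ analogously to \lemma{no4}'s complement) gives the reverse. \textbf{The main obstacle} I anticipate is bookkeeping the role of $v_1$ in each inclusion — whether $v_1$ lies in a given $U_s$ changes whether a "$\subseteq$" is literally set inclusion or inclusion-up-to-$v_1$, and the parities of the $c$'s must be tracked carefully at every step so that the monotonicity propagation through the threshold rules (which needs a genuine set inclusion of the previous states restricted to $N_i$) is valid. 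Once all six inclusions are shown to be equalities off $v_1$, and $N_1\cap U_{t+5}=N_1\cap U_{t+1}$ is checked, $c_{t+6}=c_{t+2}$ follows and hence $U_{t+6}=U_t$ — wait, I need $U_{t+6}=U_t$, not $U_{t+6}=U_{t+2}$; so the final equality I actually want to output is obtained by the same chain started one step earlier, comparing through the window $[t,t+4]$ rather than $[t+1,t+5]$, and I would organize the write-up around directly proving $U_{t+6}\setminus\{v_1\}=U_t\setminus\{v_1\}$ plus $c_{t+6}=c_t$ — the first from the double inclusion $U_t\subseteq U_{t+4}\subseteq\ldots$ wait this needs care since we only get $U_t\preceq U_{t+4}$ in one direction; so in fact the reverse inclusion $U_{t+4}\subseteq U_t$ off $v_1$ must come from \lemma{wbb} or its complement applied to a different window, and reconciling these is exactly where the argument must be done carefully rather than waved through.
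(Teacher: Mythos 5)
There is a genuine gap, and it sits exactly where you admit the argument is ``waved through''. Your proposal never obtains the reverse inclusion $\uxge{t}{t+4}$, never determines $c_t$, and never determines $c_{t+6}$ — but these are the whole content of the lemma. Moreover the one concrete route you do sketch is unsound: the claim that ``every vertex $v_i\neq v_1$ has at least as many neighbours in $U_{t+4}$ as in $U_t$'' does not follow from \lemma{wbb}, which only gives $\uxle{t}{t+4}$, an inclusion of the sets \emph{with $v_1$ removed}. In fact, in every configuration satisfying the hypothesis one ends up with $c_t=+1$, $c_{t+4}=-1$ and a loop at $v_1$, so neighbours of $v_1$ (including $v_1$ itself) see strictly \emph{fewer} positive neighbours at time $t+4$ than at time $t$; hence the propagation to $U_{t+1}\subseteq U_{t+5}$ fails. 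Likewise the hoped-for reverse inclusion cannot come from \lemma{wbb} ``applied to a different window'': that would need the pattern $(+1,-1,-1)$ somewhere in $\bseq[t+1]c{t+5}$, which is not there.

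The missing idea is the one point where the anti-threshold rule at $v_1$ must be used. Since $c_{t+5}=+1>c_{t+1}=-1$, the anti-threshold rule forces $\abs{N_1\cap U_{t+4}}<\abs{N_1\cap U_t}$; combined with $\uxle{t}{t+4}$, and with the fact (from \theorem{lyap}, since the state is recurrent) that $\uxd{t}{t+4}\subseteq N_1$, this forces $\uxeq{t}{t+4}$, $c_t=+1$, and a loop at $v_1$. Your application of \lemma{bwb} at $t+3$, giving $\uxeq{t+4}{t+6}$, is correct and is also the paper's next step. It then remains to rule out $c_{t+6}=-1$, which the paper does via recurrence: $c_{t+6}=-1$ would give $U_{t+6}=U_{t+4}$, hence a cycle of period dividing $2$, contradicting $c_{t+2}\neq c_{t+4}$. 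Your alternative plan, deducing $c_{t+6}$ from $N_1\cap U_{t+5}=N_1\cap U_{t+1}$, is not available (indeed $v_1\in N_1\cap U_{t+5}$ but $v_1\notin U_{t+1}$ here). With $c_{t+6}=+1=c_t$ and $\uxeq{t}{t+6}$ the conclusion $U_{t+6}=U_t$ follows; without the anti-threshold step and the recurrence step, your outline cannot be completed.
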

\begin{proof}Again we assume $x=+1$. By \lemma{wbb}, $\uxle t{t+4}$. Since $c_{t+5}>c_{t+1}$, and $v_1$ obeys an anti-threshold rule, we must have $\abs{U_{t+4}}<\abs{U_t}$, which is only possible if $\uxeq t{t+4}$ and $c_t=+1$. By \lemma{bwb}, $\uxeq{t+4}{t+6}$. If $c_{t+6}=-1$ then $U_{t+6}=U_{t+4}$, and the system repeats with period $2$, but this contradicts the assumption that we started in a recurrent state. So $c_{t+6}=+1$ and so $U_{t+6}=U_t$.\end{proof}

\begin{lem}\label{no3231}If $\bseq[t+1]c{t+10}=(-x,x,x,x,-x,-x,x,x,x,-x)$ then $c_{t+11}=-1$.\end{lem}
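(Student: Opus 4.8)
My plan is to describe $U_{t+5}$ and $U_{t+9}$ very precisely and to bring in the anti-threshold rule at $v_1$ only at the end. Since swapping the two opinions again gives all of $v_2,\dots,v_n$ threshold rules and $v_1$ an anti-threshold rule, I may take $x=+1$, so $(c_{t+1},\dots,c_{t+10})=(-1,+1,+1,+1,-1,-1,+1,+1,+1,-1)$ and the goal is to show $\abs{N_1\cap U_{t+10}}\geqslant r_1$, which is exactly $c_{t+11}=-1$. Throughout I write $V_s=U_s\setminus\{v_1\}$, so that $U^*_s=V_s$ and every $v_i\neq v_1$ updates monotonically in its input, while $v_1$ updates anti-monotonically.

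\emph{First} I would prove $V_{t+5}=V_{t+9}$, i.e.\ that the states at times $t+5$ and $t+9$ agree except possibly at $v_1$. One inclusion is immediate: \lemma{wbb} applied to $(c_{t+6},c_{t+7},c_{t+8})=(-1,+1,+1)$ gives $V_{t+5}\subseteq V_{t+9}$. For the reverse, apply \lemma{wbb} in the case $x=-1$ to $(c_{t+4},c_{t+5},c_{t+6})=(+1,-1,-1)$, obtaining $V_{t+7}\subseteq V_{t+3}$; since $c_{t+3}=c_{t+7}=+1$ this upgrades to $U_{t+7}\subseteq U_{t+3}$, and since every $v_i\neq v_1$ is monotone and $c_{t+4}=c_{t+8}=+1$, pushing this inclusion forward two update steps yields first $U_{t+8}\subseteq U_{t+4}$ and then $V_{t+9}\subseteq V_{t+5}$ (control of the $v_1$-coordinate is lost only at the last step, harmlessly, because $c_{t+5}=-1\neq+1=c_{t+9}$). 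Hence $V_{t+5}=V_{t+9}$, and since $v_1\notin U_{t+5}$ while $v_1\in U_{t+9}$ we get $U_{t+9}=U_{t+5}\cup\{v_1\}$.

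\emph{Next} I would push this ``$v_1$-perturbation'' two further steps and read off $\abs{N_1\cap U_{t+10}}$. Put $E=\{v_i\in N_1\setminus\{v_1\}:\abs{N_i\cap U_{t+5}}=r_i-1\}$. Running the update once from $U_{t+5}$ and from $U_{t+5}\cup\{v_1\}$, a short computation shows the only vertices in $U_{t+10}$ but not in $U_{t+6}$ are the members of $E$ that have just acquired $v_1$ as a neighbour, and $v_1$ lies in neither set (as $c_{t+6}=c_{t+10}=-1$); thus $U_{t+10}=U_{t+6}\sqcup E$ and $\abs{N_1\cap U_{t+10}}=\abs{N_1\cap U_{t+6}}+\abs E$. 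Separately, \theorem{lyap} applied at time $t+5$ says every vertex of $V_{t+4}\bigtriangleup V_{t+6}$ is a neighbour of $v_1$ with exactly $r_i-1$ neighbours in $U_{t+5}$, so $V_{t+4}\subseteq V_{t+6}\cup E$ and hence $\abs{N_1\cap U_{t+6}}\geqslant\abs{N_1\cap V_{t+4}}-\abs E$.

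\emph{Finally}, $c_{t+5}=-1$ gives $\abs{N_1\cap U_{t+4}}\geqslant r_1$; when $v_1$ has no loop, $N_1\cap U_{t+4}=N_1\cap V_{t+4}$, and chaining the last three relations gives $\abs{N_1\cap U_{t+10}}=\abs{N_1\cap U_{t+6}}+\abs E\geqslant\abs{N_1\cap V_{t+4}}\geqslant r_1$, so $c_{t+11}=-1$. The step I expect to be the real obstacle is the loop case $v_1\in N_1$: then the final estimate only yields $\abs{N_1\cap U_{t+10}}\geqslant r_1-1$, so one needs an extra argument to exclude equality — the extremal configuration forces $V_{t+6}\subseteq V_{t+4}$, $E=V_{t+4}\setminus V_{t+6}$ and $\abs{N_1\cap V_{t+4}}=r_1-1$, and I would try to show this makes the system settle into a short cycle incompatible with the prescribed pattern of $c$, in the spirit of the recurrence contradiction used in \lemma{any21}.
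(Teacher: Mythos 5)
Your argument is correct and complete only in the loopless case, and up to the last step it is essentially the paper's proof: the identity $U^*_{t+5}=U^*_{t+9}$ in your first step is exactly what the paper extracts from \lemma{wbb} applied at bases $t+3$ and $t+5$ together with two monotone update steps, and your bookkeeping $U_{t+10}=U_{t+6}\sqcup E$ with $V_{t+4}\bigtriangleup V_{t+6}\subseteq E$ is a counting reformulation of the paper's inclusion $U^*_{t+4}\subseteq U^*_{t+10}$ (indeed it yields $\abs{N_1\cap U_{t+10}}\geqslant\abs{N_1\cap V_{t+4}}$, which is the same information). With the anchor $\abs{N_1\cap U_{t+4}}\geqslant r_1$ coming from $c_{t+5}=-1$, your conclusion is sound when $v_1$ has no loop.

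The genuine gap is the loop case, which you flag but do not resolve, and which the lemma must cover: it is invoked in the proof of \theorem{mingame} in the general setting where loops are allowed (only the final sentence of that proof specialises to loopless graphs). Your anchor at time $t+4$ loses one unit precisely when $v_1\in N_1\cap U_{t+4}$, and nothing in your chain rules out the tight configuration with $\abs{N_1\cap V_{t+4}}=r_1-1$; the proposed escape (``the extremal case settles into a short cycle incompatible with the pattern'') is a hope, not an argument. The paper closes this by stepping back to time $t$, which your proof never uses: \lemma{wbb} at base $t$ (using $c_{t+1},c_{t+2},c_{t+3}=(-1,+1,+1)$) gives $U^*_t\subseteq U^*_{t+4}\subseteq U^*_{t+10}$, and the anchor becomes $\abs{N_1\cap U_t}\geqslant r_1$ from $c_{t+1}=-1$. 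If $c_t=-1$ this finishes at once since $c_{t+10}=-1$ gives $U_t\subseteq U_{t+10}$. If $c_t=+1$, note $U_t\neq U_{t+4}$ (equality would make the whole evolution $4$-periodic, contradicting $c_{t+2}\neq c_{t+6}$), so $U^*_t\subsetneq U^*_{t+10}$; moreover every vertex of $U^*_{t+10}\setminus U^*_t$ lies in $N_1$ by \theorem{lyap} applied along $t,t+2,\dots,t+10$, and this strictly gained neighbour of $v_1$ compensates the possible contribution of the loop to $\abs{N_1\cap U_t}$, giving $\abs{N_1\cap U_{t+10}}\geqslant\abs{N_1\cap U_t}\geqslant r_1$. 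Replacing your final step by this comparison with time $t$ would repair the proof; as written, the loop case is missing.
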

\begin{proof}As usual, assume $x=+1$. We have $\uxge{t+7}{t+3}$, so $\uxge{t+8}{t+4}$ and $\uxge{t+9}{t+5}$ (since $c_{t+7}=c_{t+3}$ and $c_{t+8}=c_{t+4}$). But also $\uxle{t+9}{t+5}$, so they are equal, and so $\uxge{t+10}{t+6}$. Now if $v_i\in\uxm{t+4}{t+6}$ then $\niu{t+9}=\niu{t+5}=r_i-1$ and so $v_i\in U^*_{t+10}$. Consequently $\uxge{t+10}{t+4}$. Now we distinguish two cases. If $c_t=-1$ then $U_{t+10}\supseteq U_t$ so $c_{t+11}=-1$. Otherwise we must have $\uxd[\subset]t{t+10}$ (since $U_t\neq U_{t+4}$) and so $\abs{N_1\cap U_t}\leq\abs{N_1\cap U_{t+10}}$, again giving $c_{t+11}=-1$.\end{proof}

\begin{thm}\label{mingame}If the system starts from a recurrent state and $v_1$ follows an anti-threshold rule then one of the following applies:
\begin{enumerate}[label=(\roman*)]
\item $(c_t)$ is constant and the system has period 1 or 2;
\item $(c_t)$ alternates and the system has period 2;
\item $(c_t)$ repeats the sequence $+1,+1,-1,-1$ and the system has period 4;
\item $(c_t)$ repeats $+1,+1,+1,-1,-1,-1$ and the system has period 6;
\item $(c_t)$ repeats $x,x,x,-x$ for $x=\pm 1$ and the system has period 4;
\item $(c_t)$ repeats $x,x,x,-x,-x$ for $x=\pm 1$ and the system has period 5 or 10;
\item $(c_t)$ repeats $x,-x,-x$ for $x=\pm 1$ and the system has period 3 or 6; or
\item $(c_t)$ repeats $x,-x,-x,x,x,-x,-x,-x$ for $x=\pm 1$ and the system has period 8.
\end{enumerate}
In addition, if loops are not permitted then one of (i)--(vi) must apply.\end{thm}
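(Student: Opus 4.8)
The plan is to work directly with the eventually‑periodic sequence $(c_t)$ of $v_1$'s states, exploiting the two facts recalled just before \lemma{bwb}: when $c_{t+1}=+1$ we have $U^*_t\subseteq U^*_{t+2}$, when $c_{t+1}=-1$ we have $U^*_t\supseteq U^*_{t+2}$, and (by \theorem{lyap}) every vertex of $U^*_t\bigtriangleup U^*_{t+2}$ lies in $N_1$. Since the configuration at time $t$ is the pair $(U^*_t,c_t)$ evolving under a fixed deterministic map, the system is periodic, with period $p$ a multiple of the period $p_c$ of $(c_t)$. First dispose of the easy cases: if $(c_t)$ is constant then along each parity class $t\mapsto U^*_t$ is monotone, hence constant, so $p\mid 2$ --- this is (i); if $(c_t)$ alternates then along one parity class $U^*$ is non‑decreasing and along the other non‑increasing, so each stabilises and $p=2$ --- this is (ii). So assume $(c_t)$ is non‑constant and non‑alternating; then $p_c\geq 3$, and since every maximal run is flanked on both sides by runs of the opposite value, \lemma{no4} shows every run has length $1$, $2$ or $3$. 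We study the cyclic word $w$ of run lengths, a word over $\{1,2,3\}$.

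Three local facts constrain $w$. \emph{(a)} No two consecutive runs both have length $1$: if they did, at times $s,s+1$, then $c_{s-1}=c_{s+1}$ and $c_s=c_{s+2}$, so \lemma{bwb} gives $U^*_s=U^*_{s+2}$ and hence $U_s=U_{s+2}$, forcing $p\mid 2$, contrary to $p_c\geq 3$. \emph{(b)} A length‑$2$ run immediately followed by a length‑$1$ run occurs only in case (vii): unwinding \lemma{any21} and its proof, such a configuration forces $(c_t)$ to be the period‑$3$ sequence $\dots,x,x,-x,x,x,-x,\dots$ with $p\mid 6$, and since this sequence contains the window $(x,-x,x,x,-x)$, \lemma{loop12} then shows $v_1$ has a loop. \emph{(c)} A length‑$1$ run immediately followed by a length‑$2$ run forces a loop at $v_1$, directly by \lemma{loop12} applied to the surrounding window $(x,-x,x,x,-x)$. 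In particular, in the loopless setting neither $(1,2)$ nor $(2,1)$ can occur consecutively in $w$, which rules out the word of (vii) and any word containing the $(1,2)$‑pattern of (viii); so once we establish the list (i)--(viii), the loopless statement follows.

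The heart of the proof is the classification of $w$, assuming we are not in case (vii). If all runs have the same length $k$, then $k\in\{2,3\}$ (as $(c_t)$ is neither constant nor alternating), giving (iii) or (iv). Otherwise we argue by cases on which lengths occur and how they are arranged cyclically, using: \lemma{wbb} to propagate inclusions $U^*_t\subseteq U^*_{t+4}$ (with the correct sign) across the start of each run of length $\geq 2$; \lemma{bwb} to force equalities $U^*_s=U^*_{s+2}$ across each length‑$1$ run; \lemma{no3231} to forbid the block $(3,2,3,1)$ in $w$; and, crucially, the fact that since all the symmetric differences $U^*_t\bigtriangleup U^*_{t+2}$ sit inside $N_1$ and $U^*$ returns to itself over a period, the ``growing'' stretches and ``shrinking'' stretches dictated by the blocks of $(c_t)$ must cancel not merely in size but as subsets of $N_1$; chaining the resulting inclusions around the cycle forces most of them to be equalities. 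Carrying this out should show that the only surviving words are the two‑letter words $(3,1)$ and $(3,2)$ and the four‑letter word $(1,2,2,3)$, \ie cases (v), (vi), (viii). I expect this exhaustive elimination to be the main obstacle: words such as $(3,3,2,2)$ or $(1,3,3,3)$ are not killed by any single lemma, so one must combine the monotonicity and the $N_1$‑localisation of $U^*$ with some care, keeping track of the asymmetry between the two parity classes (invisible when $p_c$ is even, but forcing a reshuffle when $p_c$ is odd).

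Finally, for each surviving pattern one reads off $p$ by a parallel study of the now‑known monotone structure of $U^*$: using \lemma{wbb} (and \lemma{bwb} where length‑$1$ runs are present) one shows the inclusions among the $U^*_t$ leave only finitely many possibilities along each parity class, and combining with $p_c$ yields $p\in\{1,2\}$ in (i), $p=2$ in (ii), $p=4$ in (iii) and (v), $p=6$ in (iv), $p\in\{5,10\}$ in (vi), $p\in\{3,6\}$ in (vii) and $p=8$ in (viii); the alternatives in (vi) and (vii) arise because the inclusion chain need not force the shorter period, and both values are realised by explicit small examples. Together with facts (b) and (c) this also shows that (vii) and (viii) require a loop at $v_1$, which completes the loopless statement.
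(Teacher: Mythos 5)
Your framework is the same as the paper's (classify the cyclic word of run lengths of $(c_t)$ using Lemmas \ref{bwb}--\ref{no3231}, then read off the period), and your preliminary steps are sound: cases (i)--(ii), the bound of $3$ on run lengths via \lemma{no4}, the exclusion of consecutive $1$-runs via \lemma{bwb}, and the observation that both a $(2,1)$ and a $(1,2)$ block contain the window of \lemma{loop12}, which is exactly how the paper gets the loopless statement. But the proposal has a genuine gap where the theorem actually lives: you explicitly defer the exhaustive elimination (``carrying this out should show\dots'', ``I expect this\dots to be the main obstacle'') and never supply the arguments that do the work. Concretely, you are missing: (1) the proof that two consecutive $3$-runs force (iv) and that two consecutive $2$-runs preceded by a run of length at least $2$ force (iii) --- each obtained by applying \lemma{wbb} at both ends of the block, once with all inclusions reversed, and chaining $\uxle{t+6}{t+2}$ with $\uxge{t+6}{t+2}$; these immediately kill the words $(3,3,2,2)$, $(1,3,3,3)$, $(2,2,3)$, etc.\ that you flag as problematic, so no extra ``cancellation inside $N_1$ around the cycle'' machinery is needed, but the arguments still have to be written; (2) the forcing argument after a $(1,2)$ block when (vii) fails: one must show the continuation is exactly $2,3$ (ruling out $c_{t+10}=+1$ by showing the run lengths would then have to alternate $3,2$ forever until a $1$-run appears, which is impossible by \lemma{any21} and \lemma{no3231}), and then a neighbour-counting step using \theorem{lyap} to get $\uxeq{t+9}{t+1}$ and hence period exactly $8$ in (viii); (3) the same alternation argument to exclude a $(1,3,2)$ block, which is what reduces the remaining words to $(3,1)$ and $(3,2)$; and (4) the period claims in (v) and (vi): for (v) that $c_t$ constant on one parity makes $U^*$ monotone, hence constant, along that parity, and for (vi) the chain $U^*_0\supseteq U^*_{10}\supseteq U^*_{20}\supseteq\cdots$ obtained by combining \lemma{wbb}-type inclusions with the $r_i-1$ equality of \theorem{lyap}, giving period $5$ or $10$.

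Without these four ingredients the classification is an announced plan rather than a proof: nothing in what you have written excludes, say, the cyclic words $(2,2,3)$ or $(1,3,2,3)$, and nothing establishes the exact periods asserted in (iii), (iv), (vi) and (viii) (periodicity of $(c_t)$ only gives that the system period is a multiple of the $c$-period, not the stated values). The good news is that every missing step is of the same flavour as the lemmas you already invoke --- repeated use of \lemma{wbb} at shifted times and both signs, plus \theorem{lyap} to upgrade inclusions to equalities --- so the gap is one of execution, not of a wrong idea; but as it stands the heart of the theorem is unproved.
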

\begin{proof}If $(c_t)$ is constant then $U_0,U_2,\ldots$ is a monotonic sequence and so eventually constant, so the system has period at most 2. Henceforth we assume $(c_t)$ is not constant, and since we started in a recurrent state both possible values of $c_t$ occur infinitely often. Consider the possible lengths of intervals on which $c_t$ does not change. By \lemma{no4} it is impossible for $\bseq[t+1]c{t+5}$ to equal $(-x,x,x,x,x)$, so no such interval can have length exceeding $3$. 

Suppose two consecutive intervals have length $1$, \ie for some $t$ we have $\bseq[t]c{t+3}=(x,-x,x,-x)$. Then $\uxeq{t+1}{t+3}$ by \lemma{bwb} so the states at $t+1$ and $t+3$ are identical and (ii) applies.

Suppose there are two consecutive intervals of length $3$, say $\bseq[t]c{t+7}=(-1,+1,+1,+1,-1,-1,-1,+1)$. Then $\uxle{t+6}{t+2}$ by \lemma{wbb}, and so also $\uxle{t+7}{t+3}$, since for any $i\neq 1$ $\niu{t+6}\leq\niu{t+2}$. If $v_i\in\uxm{t+3}{t+1}$ then $\niu{t+6}\leq\niu{t+2}=r_i-1$ and so $v_i\not\in U^*_{t+7}$. Thus $\uxle{t+7}{t+1}$, and $c_{t+7}=c_{t+1}$ so $U_{t+7}\subseteq U_{t+1}$; since $v_1$ obeys an anti-threshold rule $c_{t+8}=+1$. It follows that $\uxle{t+8}{t+2}$, so $c_{t+9}=+1$, $\uxle{t+9}{t+3}$, so $c_{t+10}=-1$, and $\uxle{t+10}{t+4}$. Now applying the same arguments with inclusions reversed to $t'=t+3$ gives $\uxge{t'+7}{t'+3}$, \ie $\uxeq{t+10}{t+4}$. Consequently the system has period $6$ and (iv) holds.

Suppose there are two consecutive intervals of length $2$ preceded by an interval of length at least $2$. Without loss of generality we have $\bseq[t]c{t+5}=(-1,-1,+1,+1,-1,-1,+1)$ for some $t$. By \lemma{wbb}, $\uxge{t+4}t$ and so $\uxge{t+5}{t+1}$ and $\uxge{t+6}{t+2}$. However, again by \lemma{wbb}, $\uxle{t+6}{t+2}$, so $U_{t+6}=U_{t+2}$ and (iii) applies. 

By \lemma{any21}, if an interval of length $2$ is followed by an interval of length $1$ then (vii) holds. Suppose (vii) does not hold, but there is some interval of length $1$ followed by one of length $2$, \ie without loss of generality we have $\bseq[t]c{t+4}=(-1,+1,-1,-1,+1)$. Then $c_{t+5}=+1$ (since otherwise (vii) holds). By \lemma{bwb}, $\uxeq{t+1}{t+3}$, and $\uxle{t+5}{t+3}$, so $U_{t+5}\subseteq U_{t+1}$ and so $c_{t+6}\leq c_{t+2}=-1$. Now $c_{t+7}=-1$ since otherwise (vii) holds, $c_{t+8}=-1$ since otherwise (iii) holds, and $c_{t+9}=+1$ by \lemma{no4}. Suppose further that $c_{t+10}=+1$. We cannot have $c_{t+11}=+1$, since that would imply (iv), contradicting recurrence of $U_t$, so the intervals from $t$ onwards begin $1,2,2,3,2$. From now on there cannot be two consecutive intervals of length $2$ (which would imply (iii)) or two consecutive intervals of length $3$ (which would imply (iv)), so intervals of length $3$ and $2$ must alternate until an interval of length $1$ occurs (which it must, by recurrence of $U_t$). But this interval of length $1$ cannot follow one of length $2$ (by \lemma{any21}) or one of length $3$ (by \lemma{no3231}). Consequently, by contradiction, we must have $c_{t+10}=-1$. Now $\uxle{t+8}{t+4}$ and $\uxle{t+9}{t+5}$. If $v_i\in\uxd{t+5}{t+3}$ then $\niu{t+4}=r_i-1$ and so $\niu{t+8}\leq r_i-1$, so $v_i\not\in U^*_{t+9}$. So $\uxle{t+9}{t+3}$, and $\uxeq{t+3}{t+1}$. If $\uxd[\neq]{t+9}{t+3}$ then $\abs{N_1\cap U_{t+9}}<1+\abs{N_1\cap U_{t+3}}$, which is impossible since $c_{t+10}<c_{t+4}$. So $\uxeq{t+9}{t+1}$ and (viii) applies.

If none of (i), (ii), (iii), (iv), (vii) or (viii) hold, then, no interval of length $1$ can be followed or preceded by one of length $2$, and no two consecutive intervals can have the same length. We cannot have three consecutive intervals of lengths $1,3,2$, since subsequent intervals must alternate lengths $3$ and $2$ until another interval of length $1$ occurs, but this cannot follow an interval of length $2$, nor one of length $3$ by \lemma{no3231}. So the only remaining possibilities are that periods of lengths $1$ and $3$ alternate, or that periods of lengths $2$ and $3$ alternate.

In the former case we have $c_{t}$ constant for all $t$ of one parity, say all odd $t$, implying that $U^*_0,U^*_2,\ldots$ is a monotonic sequence. Hence this sequence must be constant, and (v) applies. In the latter case, suppose without loss of generality that $\bseq[0]c4=(+1,+1,-1,-1,-1)$ and this pattern repeats. We have $\uxle40$, $\uxle95$ and $U^*_{10}\subseteq\uxle86$. If $v_i\in\uxm64$ then $\niu5=r_i-1$ and so $\niu9\leq r_i-1$, so $v_i\not\in U^*_{10}$. So $\uxge0{10}\supseteq U^*_{20}\cdots$, and so this sequence is constant, giving (vi).

Thus one of the enumerated situations occurs. If there are no loops, by \lemma{loop12} we cannot have an interval of length $1$ followed by one of length $2$, so (vii) and (viii) are impossible and one of (i)--(vi) occurs in this case.\end{proof}

\section{Graphs achieving these periods}
In this section we show that \theorem{mingame}, \theorem{tfree} and \theorem{loops} are best possible, by giving example graphs to show that all periods mentioned may be attained. We also demonstrate that the restriction in Theorems \ref{tfree} and \ref{loops} that no triangle contains $v_1$ is necessary, by giving a family of graphs on which any period can be obtained with a suitable choice of rule at $v_1$.

It is easy to attain period $1$ or $2$, for example on any bipartite $3$-regular graph in which all vertices except $v_1$ follow the majority rule, by starting all vertices at the same state (period $1$) or all vertices of one part in one state and all vertices of the other part in the other state (period $2$). It is easy to see that each vertex other than $v_1$ will have the desired period unaffected by $v_1$, since its other neighbours will form a majority of one state. Consequently, no matter what rule $v_1$ follows it must also have the same period after the first time step.

Period $4$ is also easy to obtain, as it occurs for the graph consisting of a single edge between minority-rule and majority-rule vertices. This follows (iii) of \theorem{mingame}; we give another period-$4$ example to demonstrate (v) can also occur. 

\FloatBarrier\subsection{Graphs without loops}
Here we give examples to show that the remaining alternatives given by \theorem{mingame} for graphs without loops are all possible. In each case all vertex degrees are odd and every vertex obeys the majority rule except for $v_1$ (indicated by the square) which obeys the minority rule.

\begin{figure}[h]
\begin{minipage}[b]{.5\linewidth}
\centering\includegraphics[scale=0.25]{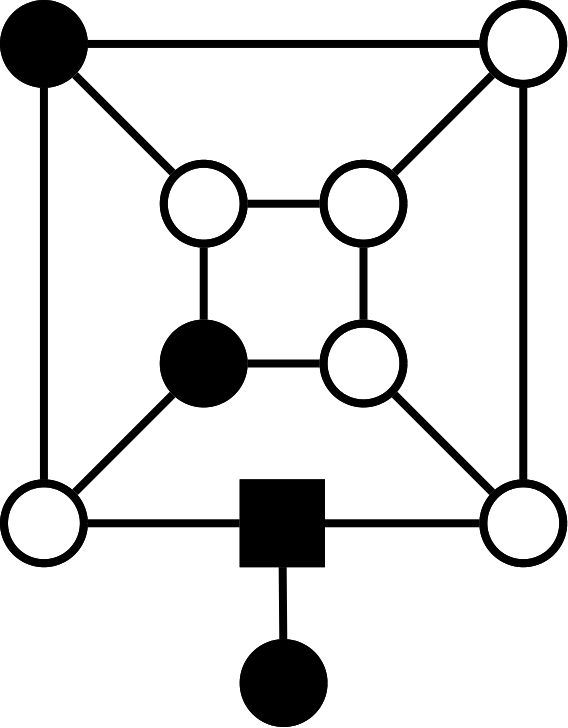}
\subcaption{Period 4 (v)}
\end{minipage}%
\begin{minipage}[b]{.5\linewidth}
\centering\includegraphics[scale=0.25]{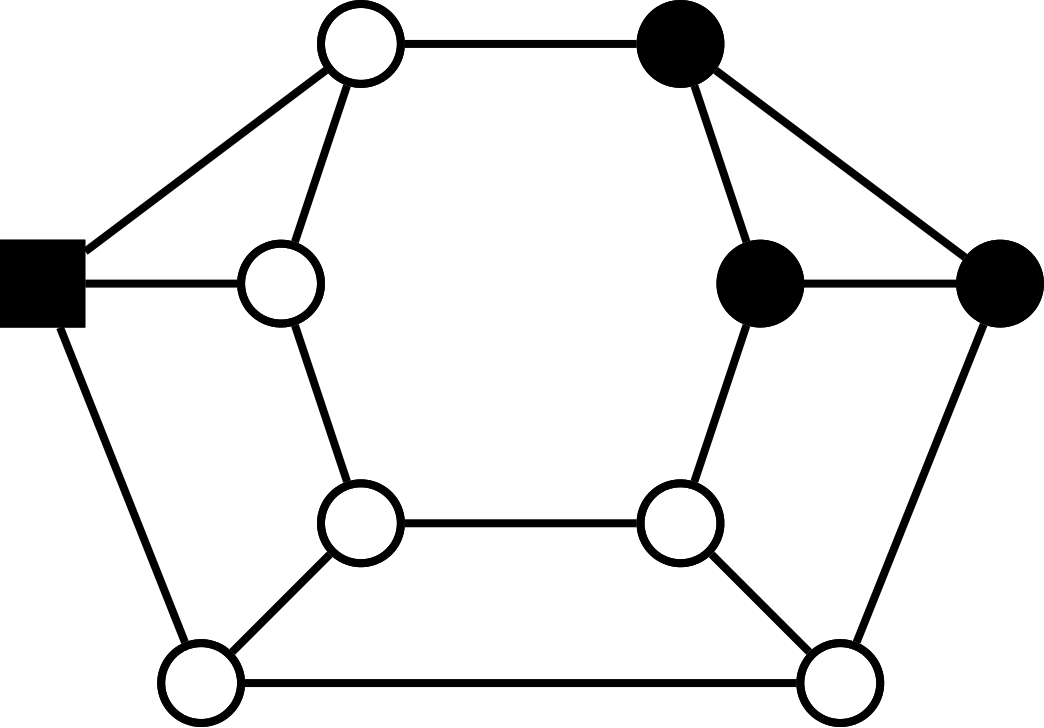}
\subcaption{Period 5}\label{cubic5}
\end{minipage}
\begin{minipage}[b]{.5\linewidth}
\centering\includegraphics[scale=0.25]{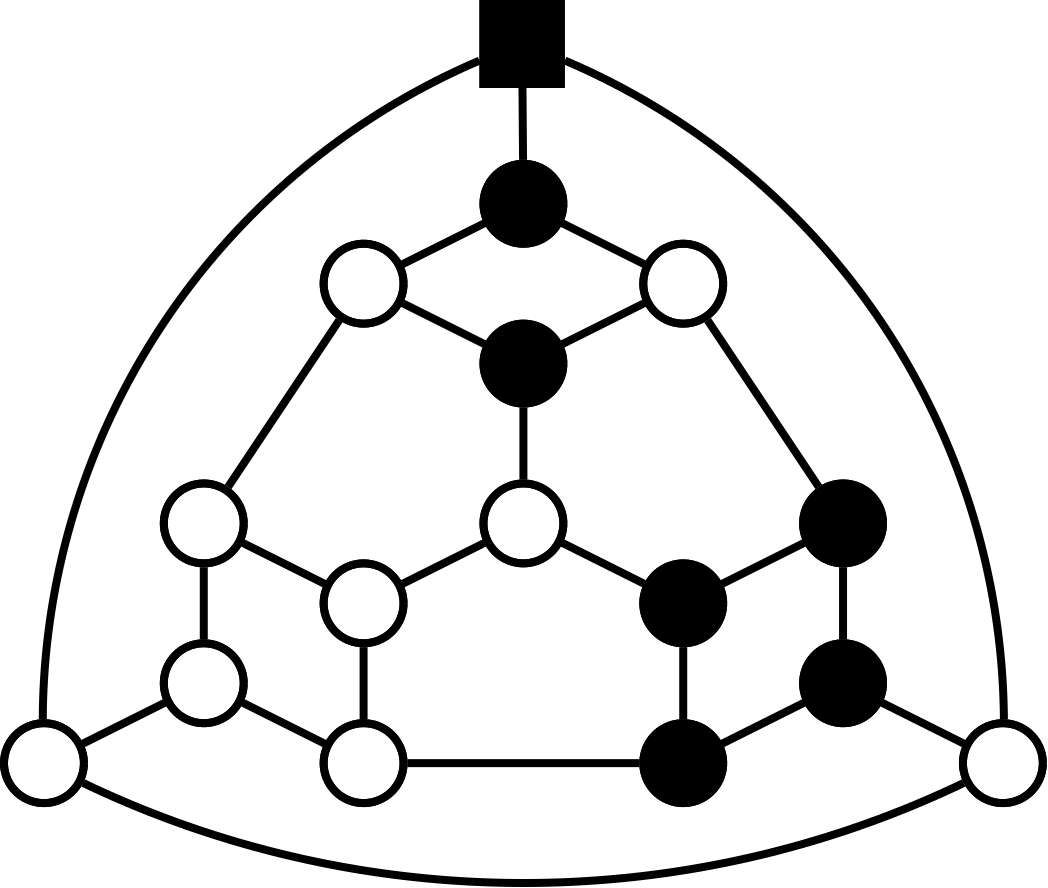}
\subcaption{Period 6}
\end{minipage}%
\begin{minipage}[b]{.5\linewidth}
\centering\includegraphics[scale=0.25]{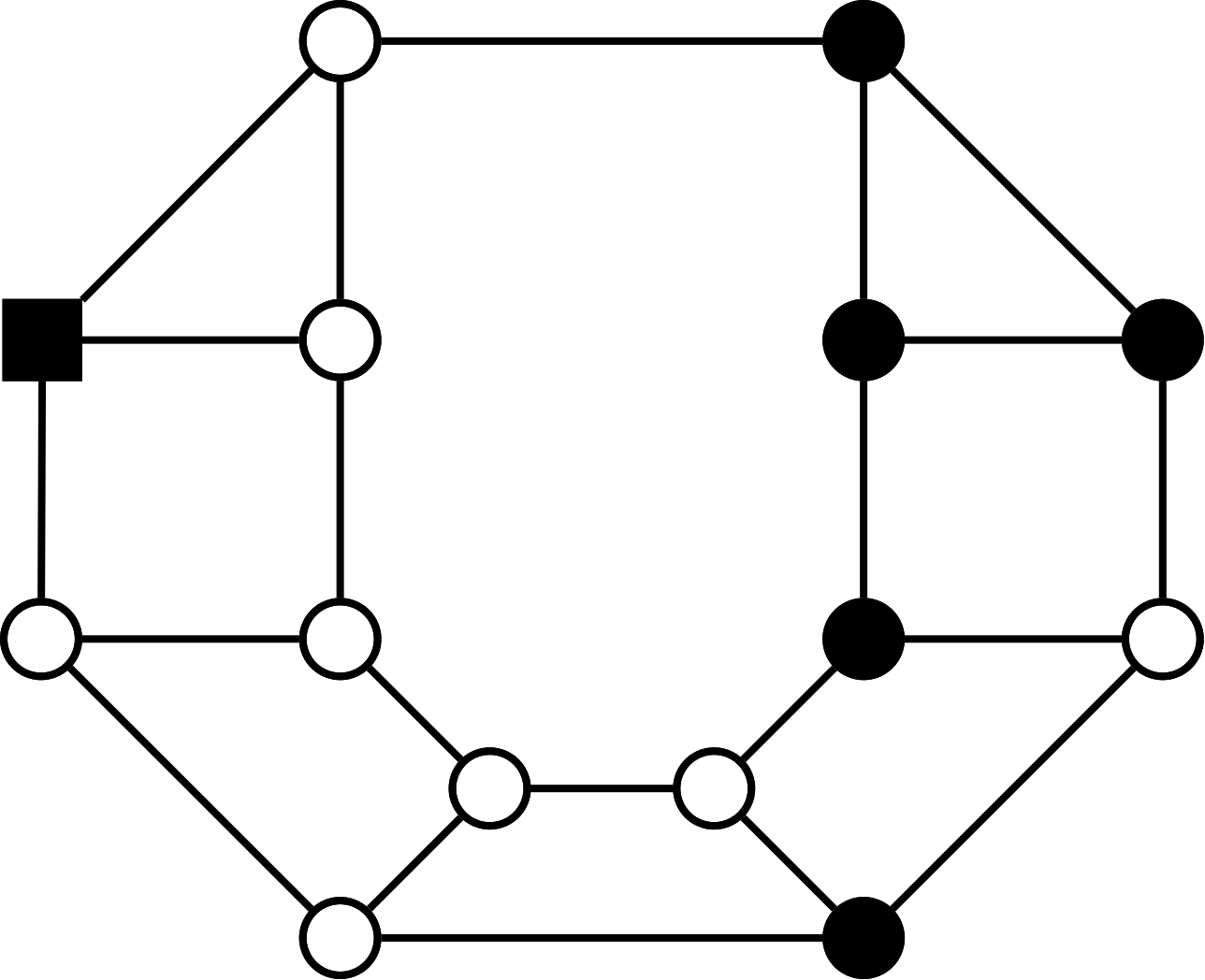}
\subcaption{Period 10}
\end{minipage}
\caption{Loopless graphs with minority rule at $v_1$}\label{fig:1}
\end{figure}

\FloatBarrier\subsection{Triangle-free graphs with loops}

\begin{figure}[h]
\begin{minipage}[b]{.5\linewidth}
\centering\includegraphics[scale=0.25]{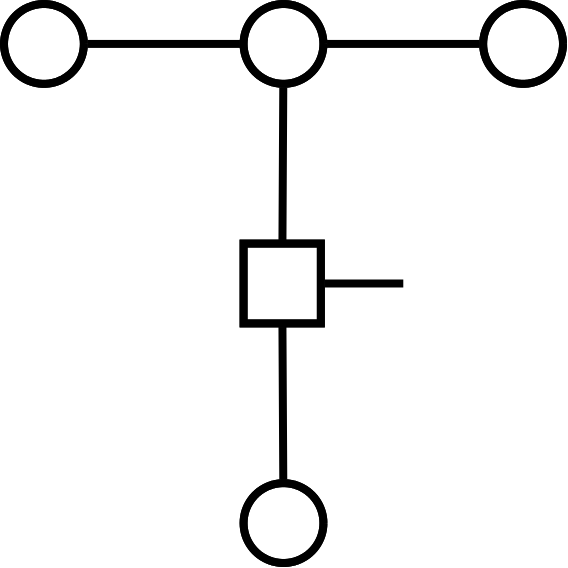}
\subcaption{Period 3}\label{three}
\end{minipage}%
\begin{minipage}[b]{.5\linewidth}
\centering\includegraphics[scale=0.25]{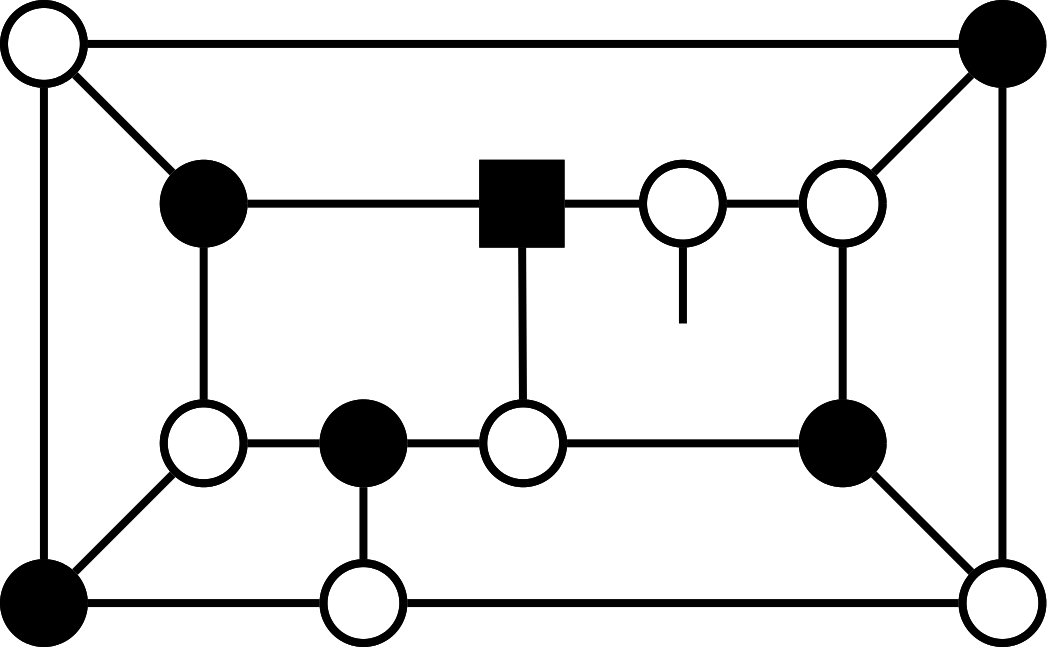}
\subcaption{Period 6}\label{loopv2}
\end{minipage}
\begin{minipage}[b]{.5\linewidth}
\centering\includegraphics[scale=0.25]{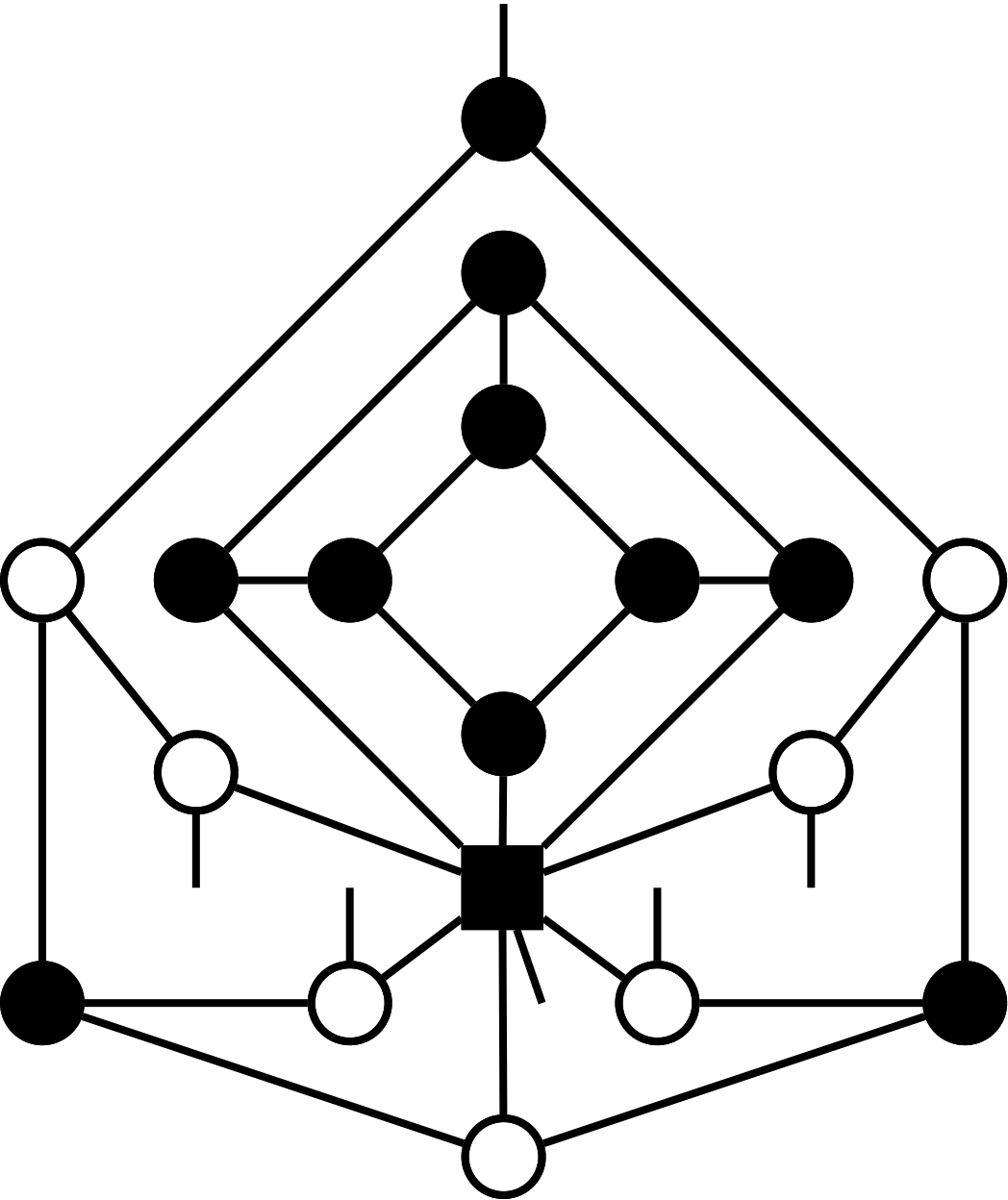}
\subcaption{Period 8}
\end{minipage}%
\begin{minipage}[b]{.5\linewidth}
\centering\includegraphics[scale=0.25]{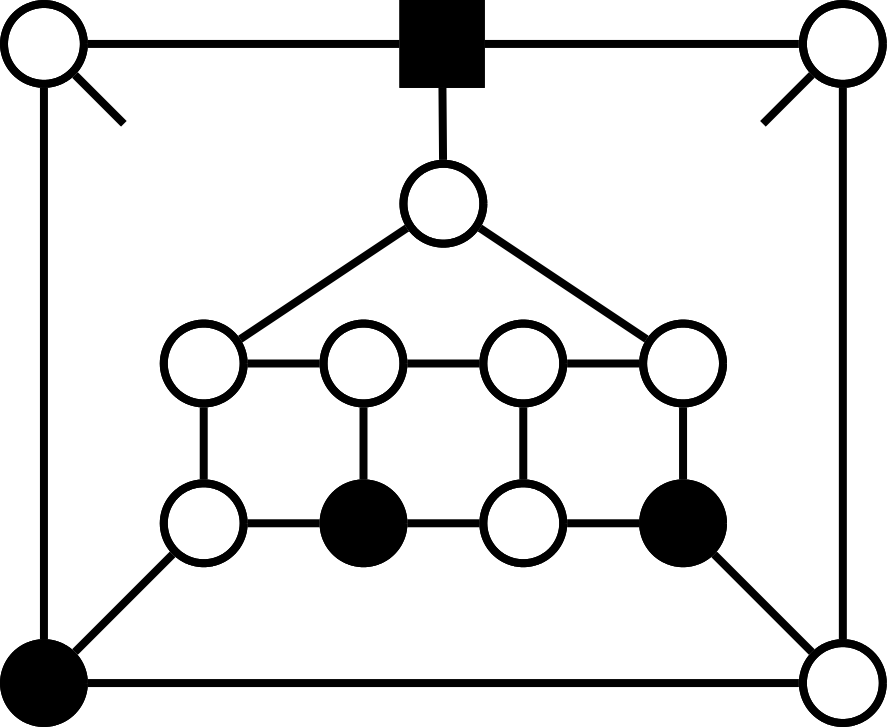}
\subcaption{Period 10}
\end{minipage}
\caption{Triangle-free graphs with minority rule at $v_1$}\label{tfloop}
\end{figure}

Here we give examples of triangle-free graphs which attain periods other than $1$, $2$ or $4$; by \theorem{tfree} such graphs must have loops. The additional periods possible when loops are permitted are $3$, $6$, $8$, $10$ and $12$. By \theorem{mingame}, period $12$ is not possible if $v_1$ obeys an anti-threshold rule. In fact the other periods are possible even with this restriction, as shown in Figure~\ref{tfloop}. Again all neighbourhoods are odd and every vertex obeys the majority rule except for $v_1$ (indicated by the square) which obeys the minority rule; each loop is indicated by a short line leaving its vertex. While \theorem{loops} applies even if triangles which do not meet $v_1$ are permitted, in fact all possible periods can be obtained without triangles anywhere in the graph.

\FloatBarrier\subsection{More general rules at $v_1$}

Finally we give an example of a triangle-free graph on which period $12$ is attained, together with a general construction to show that any period is possible without the restriction that $v_1$ is not in a triangle. In each case a more complicated rule is required at $v_1$. In Figure~\ref{per12}, $v_1$ takes state $+1$ at time $t+1$ if and only if $\abs{N_1\cap U_t}\in\{0,2,3,4\}$, and every other vertex follows the majority rule.

\begin{figure}[!h]
\begin{minipage}[b]{.5\linewidth}
\centering\includegraphics[scale=0.25]{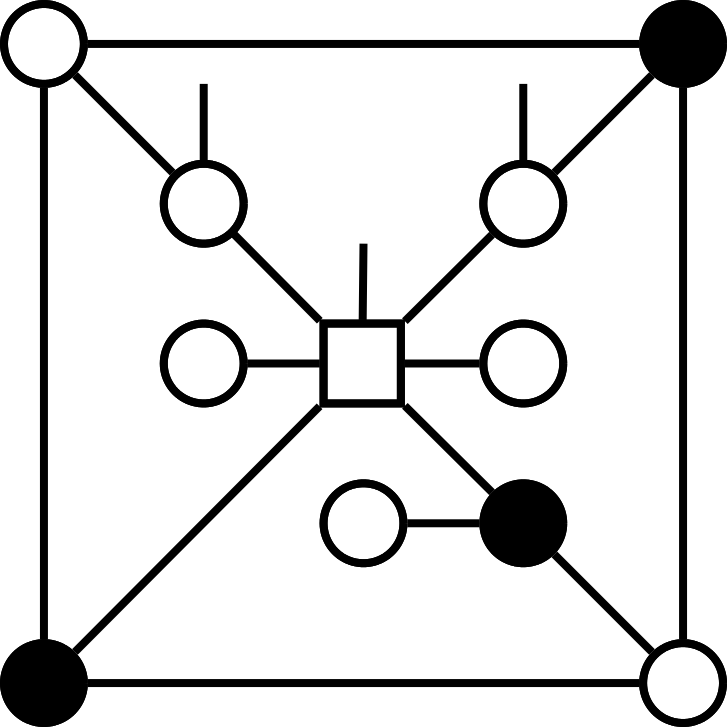}
\subcaption{Period 12}\label{per12}
\end{minipage}%
\begin{minipage}[b]{.5\linewidth}
\centering\includegraphics[scale=0.25]{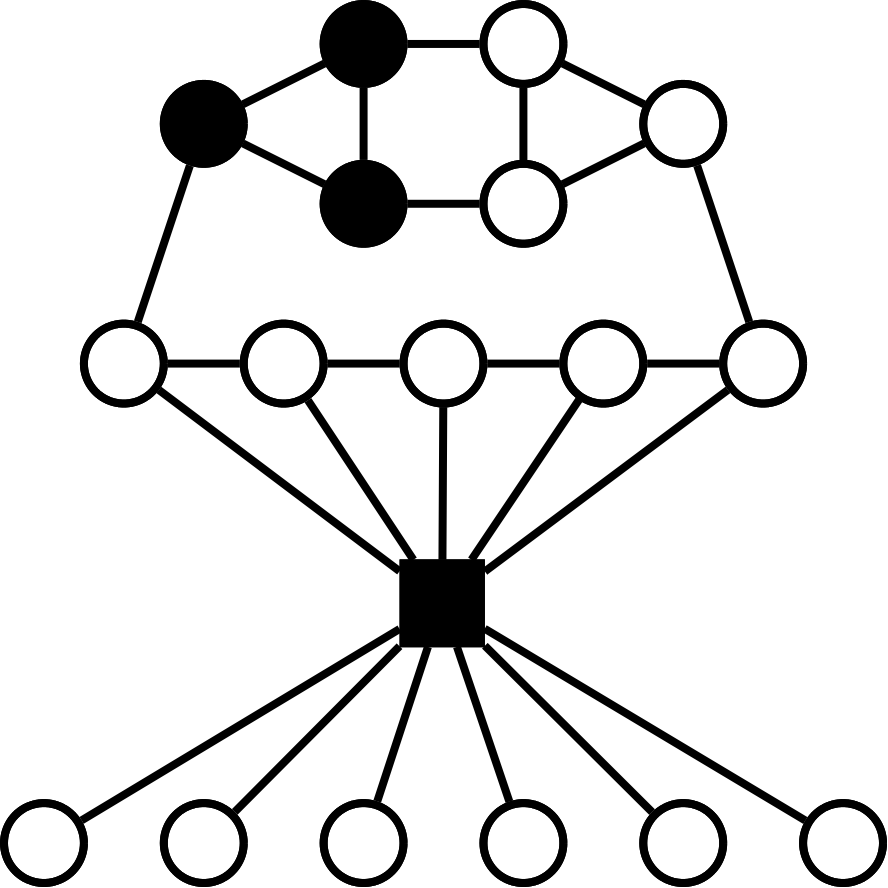}
\subcaption{$G_6$}\label{anyper}
\end{minipage}
\caption{Graphs with more general rules at $v_1$}
\end{figure}

To show that any period except $3$ may be realised on a graph with no restriction on triangles (even if loops are not permitted), we define the graph $G_k$, for $k\geq 2$, on vertex set $\seq v{2k+8}$ as follows. $v_1$ has neighbours $\seq[2]v{2k+2}$, of which $\seq[2]v{k+2}$ have no other neighbours and $\seq[k+3]v{2k+2}$ induce a path. The remaining vertices form two triangles connected by a pair of edges, with two further edges between the ends of the path and the triangles. Figure~\ref{anyper} shows $G_6$. Start from the state shown, \ie $U_0=\{v_1,v_{2k+3},v_{2k+4},v_{2k+5}\}$. Setting $v_1$ to have state $+1$ at time $t+1$ if and only if $\abs{N_1\cap U_t}\in\{0,1,k+1,\ldots,2k\}$ gives period $2k+1$, whereas setting $v_1$ to have state $+1$ at time $t+1$ if and only if $\abs{N_1\cap U_t}\in\{0,1,k+1,\ldots,2k-1\}$ gives period $2k$. Thus any period of at least $4$ occurs on some graph in this sequence, and of course periods $1$ and $2$ can be obtained even with the majority rule at $v_1$. The final case of period $3$ is not possible without loops, and Figure~\ref{three} shows that it is possible if loops are permitted.

\begin{prop}For any graph $G$ without a loop at $v_1$ and for any rule at $v_1$ which is a function of the states of its neighbours, period $3$ is not possible.\end{prop}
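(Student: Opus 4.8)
The plan is to suppose that the system has settled into a cycle of period exactly $3$ and derive a contradiction. Fix $t$ large enough that the conclusion of \theorem{lyap} holds and that $U_s=U_{s+3}$ for every $s\geq t$, and write $c_s$ for the state of $v_1$ at time $s$. The one consequence of \theorem{lyap} I will use throughout is the following: if $i\neq1$ and $v_i\in\uxd s{s+2}$ for some large $s$, then applying \theorem{lyap} at time $s+1$ gives $v_i\in N_1$ and $\niu{s+1}=r_i-1$, so (as $v_1\in N_i$) $\abs{N_i\cap U_{s+1}}$ equals $r_i-1$ if $c_{s+1}=-1$ and $r_i$ if $c_{s+1}=+1$; hence such a $v_i$ lies in $U_{s+2}$ exactly when $c_{s+1}=+1$, and therefore $\uxle s{s+2}$ if $c_{s+1}=+1$ while $\uxge s{s+2}$ if $c_{s+1}=-1$.

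Since the whole system has period $3$, the sequence $(c_s)$ has period $1$ or $3$. If it has period $1$, say $c_s=c$ for all large $s$, then by the consequence above $\uxle s{s+2}$ for all large $s$ if $c=+1$, and $\uxge s{s+2}$ for all large $s$ if $c=-1$; combined with $U^*_s=U^*_{s+3}$ this forces $(U^*_s)$ to be constant, and since $v_1$ is constant too $(U_s)$ is constant, contradicting period $3$. So from now on $(c_s)$ has period exactly $3$. A non-constant binary sequence of period $3$ attains one of its two values at a single residue class modulo $3$, so after shifting the time origin and, if necessary, replacing the system by its state-swapped version — which preserves the period, sends each threshold rule to a threshold rule and the arbitrary rule at $v_1$ to an arbitrary rule, and leaves the graph (hence the loops) unchanged — I may assume $c_s=-1$ precisely when $s\equiv2\pmod 3$.

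For $r\in\{0,1,2\}$ let $W_r$ be the common value of $U^*_s$ over large $s\equiv r\pmod 3$. Applying the consequence above with $s\equiv0$ and with $s\equiv2$ (where $c_{s+1}=+1$ in both cases) gives $W_0\subseteq W_2$ and $W_2\subseteq W_1$, so $W_0\subseteq W_2\subseteq W_1$; hence, for large $s$, the set $U_s$ equals $W_0\cup\{v_1\}$, $W_1\cup\{v_1\}$, or $W_2$ according as $s\equiv0,1,2$. The crucial step is to show $W_1=W_2$. Suppose not and pick $v_i\in W_1\setminus W_2$ (necessarily $i\neq1$); since $W_0\subseteq W_2$ this forces $v_i\notin W_0$, so $v_i\in U_s$ exactly when $s\equiv1$. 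At a time $s\equiv1$ the vertex $v_i$ has just met its threshold against $U_{s-1}=W_0\cup\{v_1\}$, while at a time $s\equiv2$ it has just failed its threshold against $U_{s-1}=W_1\cup\{v_1\}$; as $W_0\cup\{v_1\}\subseteq W_1\cup\{v_1\}$ the first count cannot exceed the second, giving $r_i\leq\abs{N_i\cap(W_0\cup\{v_1\})}\leq\abs{N_i\cap(W_1\cup\{v_1\})}<r_i$, a contradiction. Hence $W_1=W_2$.

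Finally the hypothesis enters: since there is no loop at $v_1$ we have $v_1\notin N_1$, so $N_1\cap U_s=N_1\cap W_r$ whenever $s\equiv r$. The state $c_{s+1}$ is determined by $N_1\cap U_s$ via the rule $\mathcal{S}_1$ at $v_1$: at a time $s\equiv2$ it is the value of $c$ at residue $0$, namely $+1$, so $N_1\cap W_2\in\mathcal{S}_1$, while at a time $s\equiv1$ it is the value of $c$ at residue $2$, namely $-1$, so $N_1\cap W_1\notin\mathcal{S}_1$; as $W_1=W_2$ these two sets coincide, which is absurd, completing the proof. The main obstacle is the step $W_1=W_2$: because $\mathcal{S}_1$ is an arbitrary set system rather than a monotone one, the nested inclusions $N_1\cap W_0\subseteq N_1\cap W_2\subseteq N_1\cap W_1$ do not by themselves contradict $N_1\cap W_0,N_1\cap W_2\in\mathcal{S}_1$ together with $N_1\cap W_1\notin\mathcal{S}_1$, so one genuinely has to collapse $W_1$ onto $W_2$, and it is exactly there that the threshold rules of the other vertices, together with the monotonicity supplied by \theorem{lyap}, are used.
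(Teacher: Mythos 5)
Your proof is correct and follows essentially the same route as the paper: the monotone chain $U^*_0\subseteq U^*_2\subseteq U^*_1$ from Theorem~\ref{lyap}, a threshold-monotonicity contradiction from a vertex in $U^*_1\setminus U^*_2$, and the no-loop observation that $v_1$'s next state is a function of $N_1\cap U^*_t$. The only difference is cosmetic: the paper first deduces $U^*_1\neq U^*_2$ from $c_2\neq c_0$ and then derives the contradiction, whereas you first force $W_1=W_2$ and then contradict the rule at $v_1$.
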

\begin{proof}Suppose not, and start the system in a recurrent state of period $3$, so that $U_0\to U_1\to U_2\to U_0$. If $v_1$ is in the same state at every time then $U^*_{2t}$ is monotonic, so constant, contradicting period $3$. So without loss of generality (by swapping states and shifting if necessary), $c_t=+1$ if $t\equiv 0,1$ (mod $3$) and $c_t=-1$ otherwise. Now $\uxle02\subseteq U^*_1$. We must have $\uxd[\neq]12$, since $c_2\neq c_0$. But if $v_i\in\uxm12$ then $\niu1<\niu0$, contradicting $\uxle01$. 
\end{proof}

\FloatBarrier\section{Acknowledgements}

Both authors acknowledge support from the European Union through funding under FP7-ICT-2011-8 project HIERATIC (316705), and the first author also acknowledges support from the European Research Council (ERC) under the European Union's Horizon 2020 research and innovation programme (grant agreement No.\ 639046).

\end{document}